\documentclass[reqno,11pt]{amsart}
\oddsidemargin9mm
\evensidemargin9mm 
\textwidth15.0cm  
\parskip3pt  
\usepackage{tkz-euclide}
\usepackage{hyperref}
\usepackage{amsmath} 
\usepackage{amsfonts}
\usepackage{mathtools}
\usepackage{amssymb} 
\usepackage{mathrsfs} 
\usepackage{graphicx}  
\usepackage{bm}
\usepackage{tikz} 
\usepackage{centernot}
\usepackage{soul}
\usetikzlibrary{svg.path} 
\thispagestyle{empty}

\usepackage{longtable}
\usepackage{makecell}

\usepackage{geometry}
 \geometry{
 left=30mm,
 right=27mm,
 top=27mm,
 }

\usetikzlibrary{calc} 
\usepackage{xcolor}  
\usetikzlibrary{patterns}
\usetikzlibrary{arrows.meta} 
\usepackage{amsthm} 
\usepackage{float}
\usepackage{enumitem}
\setenumerate{ ref={\normalfont(\roman*)},label={\normalfont(\roman*)},itemsep=3pt}
\usepackage[english]{babel}
\usepackage[font=footnotesize, labelfont=bf]{ caption}
\usepackage{ esint }
\usepackage{stackengine,scalerel,graphicx}
\stackMath

\definecolor{trueblue}{rgb}{0.0, 0.45, 0.81}
\definecolor{truegreen}{rgb}{0.13, 0.55, 0.13}

\theoremstyle{plain}
\begingroup
\newtheorem{theorem}{Theorem}[section]
\newtheorem{lemma}[theorem]{Lemma}

\newtheorem{proposition}[theorem]{Proposition}
\newtheorem{corollary}[theorem]{Corollary}

\endgroup

\theoremstyle{definition}
\begingroup
\newtheorem{definition}[theorem]{Definition}

\endgroup

\renewcommand{\tilde}{\widetilde}

\DeclareMathOperator{\Res}{Res}

\renewcommand{\d}{ \mathrm{d}}

\numberwithin{equation}{section}
\newcommand{\N}{\mathbb{N}}

\newcommand{\E}{\mathcal{E}}
\newcommand{\R}{\mathbb{R}}
\newcommand{\C}{\mathbb{C}}

\renewcommand{\S}{{\mathbb{S}}}




\usepackage[normalem]{ulem}


\def \mb{\mathbb}

\def \n{\nabla}
\def \p{\partial}
\def \d{\,\mathrm{d}}
\def \mc{\mathcal}

\def \tp{\textup}



\begin{document}
	
\title
[Existence of degenerate bubbles]
{
On the existence of degenerate solutions\\ of the two-dimensional $H$-system
}
	
\author[Andr\'e Guerra]{Andr\'e Guerra} 
\address[Andr\'e Guerra]{Institute for Theoretical Studies, ETH Zürich, CLV, Clausiusstrasse 47, 8006 Zürich, Switzerland}
\email{andre.guerra@eth-its.ethz.ch}

\author[Xavier Lamy]{Xavier Lamy} 
\address[{Xavier Lamy}]{Institut de Mathématiques de Toulouse, Université Paul Sabatier, 118, route de Narbonne, F-31062 Toulouse Cedex 8, France}
\email{Xavier.Lamy@math.univ-toulouse.fr}	
	
\author{Konstantinos Zemas}
\address[Konstantinos Zemas]{Institute for Applied Mathematics, University of Bonn\\
Endenicher Allee 60, 53115 Bonn, Germany}
\email{zemas@iam.uni-bonn.de}
	


\begin{abstract}
{We consider entire solutions $\omega\in\dot H^1(\R^2;\R^3)$ of the $H$-system
\begin{align*}
\Delta\omega=2\omega_x\wedge\omega_y\,,
\end{align*}
which we refer to as \emph{bubbles}.
Surprisingly, and contrary to conjectures raised in the literature, we find that bubbles with degree at least three can be degenerate: the linearized $H$-system around a bubble can admit solutions that are not tangent to the smooth family of bubbles.  We then give a complete algebraic characterization of degenerate bubbles.}
\end{abstract} 
	
\maketitle
\thispagestyle{empty}


\section{Introduction}

In this short note we study critical points of the functional 
$\mc E\colon \dot H^1(\R^2
;\R^3
)\to \R$ defined by
\begin{equation}\label{eq:H_functional}
\mathcal E(u):=\frac{1}{2}\int_{\R^2}|\n u|^2 \d \mc L^2+\frac 2 3\int_{\R^2} \langle u, u_x\wedge u_y  \rangle \d \mc L^2  \,,
\end{equation} 
where $\langle\cdot,\cdot\rangle$ denotes the standard inner product in $\R^3$\,, $\mc L^2$ the 2-dimensional Lebesgue measure and $u_x,u_y$ the partial derivatives of $u$ with respect to $x,y$ respectively. Both terms in \eqref{eq:H_functional} are \textit{conformally invariant} and so, after identifying $\overline{\R^2}\cong \mb S^2$ through stereographic projection,  the term
\begin{equation}
\label{eq:vol}
 \mc V(u):=\frac 1 3 \int_{\R^2} \langle u,u_x \wedge u_ y\rangle \d \mc L^2
\end{equation}
corresponds to the signed algebraic volume of the region enclosed by $u(\mb S^2)$, whenever $u$ is regular. 
For maps which are just in the homogeneous Sobolev space $\dot H^1(\R^2;\R^3
)$, one can define $ \mc V(u)$ through its continuous extension \cite{Wente1969}. 

The functional \eqref{eq:H_functional} is very classical, as it appears naturally in the study of constant mean curvature surfaces, see \textit{e.g.}\ \cite[
Section~III.5\color{black}]{Struwe2008} and the references therein. To see this connection, we note that 
the first variation $\mathcal E'\colon \dot{H}^{1}(\R^2;\R^3) \to \dot H^{-1}(\R^2;\R^3)$ is given by
\begin{equation}\label{eq:first_variation}
\mathcal E'(u)=-\Delta u+2u_x\wedge u_y\,,
\end{equation} 
and thus we arrive at the following:

\begin{definition}\label{def:bubbles}
A \textit{bubble} is a map $\omega \in \dot H^1(\R^2;\R^3)$ such that $\mc E'(\omega)=0$, \textit{i.e.},
\begin{equation}\label{eq:equation_for_bubbles}	
\Delta \omega=2\,\omega_x\wedge \omega_y \ \text{ in } \mathcal{D}'(\R^2)\,.
\end{equation}
\end{definition}

In other words, bubbles are simply entire finite-energy solutions of \eqref{eq:equation_for_bubbles}, which is known as the \textit{$H$-system}.  Any such solution is necessarily \textit{weakly conformal}, \textit{i.e.}, it satisfies
\begin{equation}\label{eq:omega_conf}
|\omega_x|^2-|\omega_y|^2 =\langle \omega_x,\omega_y\rangle =0 \  \text{ in } \R^2\,.
\end{equation}
Combined, equations \eqref{eq:equation_for_bubbles} and \eqref{eq:omega_conf} assert that $\omega$ is a (possibly branched) weakly conformal parametrization of a closed surface with  mean curvature identically 1. In fact, a classical theorem due to Hopf asserts that any such surface is a unit sphere. This can also be seen  as a consequence of the following classification result, due to Brezis and Coron \cite[Lemma A.1]{Brezis1985},  which describes completely the collection of bubbles:

\begin{theorem}[Classification of bubbles \cite{Brezis1985}]\label{thm:BC}
Let $\omega \in \dot H^1(\R^2;\R^3)$ be a bubble. Then there exist complex polynomials $P,Q\in \C[z]$ and a vector $b\in \R^3$ such that
\begin{equation}\label{form_of_omega}
\omega(z)=\pi\left(\frac{P(z)}{Q(z)}\right)+b\,,
\end{equation}
where $\pi\colon \C \to \S^2$ is the inverse stereographic projection, i.e.,
\begin{equation}
\label{eq:stereo}
\pi(z):=\left(\frac{2z}{|z|^2+1},\frac{|z|^2-1}{|z|^2+1}\right)\,,
\end{equation}
and where we identify $ z=(x,y)=x+i y$. Moreover, if
\begin{equation}
\label{eq:condPQ}
P/Q \text{ is irreducible}, \quad k:=\max \{\deg P,\deg Q \},
\end{equation}
then we have
$$\frac{1}{8\pi}\int_{\R^2} |\n \omega|^2  \d \mc L^2  = k\,.$$
\end{theorem}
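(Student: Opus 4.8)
The plan is to prove that any bubble parametrizes a round unit sphere and then to read off its rational structure in stereographic coordinates. First I would record the standard regularity and compactification facts: the right-hand side of \eqref{eq:equation_for_bubbles} has the Wente/Jacobian structure, so the $\eps$-regularity theory for the $H$-system (equivalently, Hélein's moving-frame method) upgrades $\omega\in\dot H^1$ to a smooth map, and the conformal invariance of the Dirichlet energy together with point-removability at infinity lets us regard $\omega$ as a smooth map $\S^2\to\R^3$. In particular \eqref{eq:omega_conf} holds pointwise. I would then pass to complex form: writing $\omega_z:=\tfrac12(\omega_x-i\omega_y)$, condition \eqref{eq:omega_conf} becomes $\omega_z\cdot\omega_z=0$ (for the $\C$-bilinear extension of the inner product), while \eqref{eq:equation_for_bubbles} reads $\omega_{z\bar z}=-i\,\omega_z\wedge\overline{\omega_z}$.

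The crux is to show that the image lies on a unit sphere, i.e. that $|\omega-b|\equiv1$ for some $b\in\R^3$. Where $\n\omega\neq0$ the surface carries a well-defined unit normal $N$, and, once \eqref{eq:omega_conf} is used, \eqref{eq:equation_for_bubbles} is exactly the statement that its mean curvature is $H\equiv1$. I would then consider the Hopf differential $\Phi:=\langle\omega_{zz},N\rangle$: the Codazzi equations for a constant-mean-curvature surface give $\partial_{\bar z}\Phi=0$, so $\Phi\,dz^2$ is a holomorphic quadratic differential, a priori away from the finitely many branch points where $\n\omega=0$. Using the removable-singularity step, $\Phi\,dz^2$ extends to a global holomorphic quadratic differential on $\S^2$; since the relevant bundle $K_{\S^2}^{\otimes2}$ has negative degree, it must vanish identically. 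Hence the surface is totally umbilic, and a connected closed totally umbilic surface of mean curvature $1$ is a round unit sphere, yielding $b$ with $|\omega-b|\equiv1$.

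With the sphere confinement in hand, set $v:=\omega-b\colon\S^2\to\S^2\subset\R^3$. Since $v$ is weakly conformal and $\pi$ is an orientation-preserving conformal diffeomorphism, the composite $g:=\pi^{-1}\circ v\colon\S^2\to\C\cup\{\infty\}$ is a weakly conformal self-map of the sphere. Because $v$ takes values in $\S^2$ we have $v_x,v_y\in T_v\S^2$, so $v_x\wedge v_y$ is automatically parallel to $v$; the precise sign forced by \eqref{eq:equation_for_bubbles} (namely $\langle v_x\wedge v_y,v\rangle=-\tfrac12|\n v|^2<0$) fixes the orientation and rules out the antiholomorphic case, so that $g$ is orientation-preserving, hence holomorphic wherever $dg\neq0$, and by smoothness $g_{\bar z}\equiv0$ everywhere. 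A holomorphic self-map of $\S^2\cong\mb{CP}^1$ is rational, so $g=P/Q$ for some $P,Q\in\C[z]$ and $\omega=\pi(P/Q)+b$, which is \eqref{form_of_omega}.

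For the energy identity I would normalize $P/Q$ to lowest terms, so that it is irreducible with $k=\max\{\deg P,\deg Q\}$ equal to the topological degree of $P/Q\colon\mb{CP}^1\to\mb{CP}^1$. Conformality gives $\tfrac12|\n\omega|^2=|\omega_x|^2=|\omega_x\wedge\omega_y|$, which is exactly the Jacobian of $v\colon\S^2\to\S^2$; integrating, $\tfrac12\int_{\R^2}|\n\omega|^2\,\d\mc L^2$ equals the area of $\S^2$ counted with multiplicity, namely $4\pi k$, and dividing by $8\pi$ yields $k$. I expect the second paragraph to be the genuine obstacle: establishing the holomorphicity of the Hopf differential, globalizing it across the branch points and the point at infinity on a genus-zero surface, and invoking the resulting umbilicity. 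The regularity at the critical scaling in the first paragraph is also delicate, but is by now classical.
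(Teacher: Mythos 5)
The paper contains no proof of this statement: it is quoted directly from \cite{Brezis1985} (Lemma A.1), so there is no internal argument to compare against. Judged on its own, your outline is a correct rendition of the classical geometric proof, but it is a genuinely different route from the one in the cited reference. Brezis and Coron work entirely on $\C$ with Liouville-type theorems for integrable holomorphic quantities: smoothness comes from Wente-type estimates, conformality from the vanishing of the entire $L^1$ function $\langle \omega_z,\omega_z\rangle=\tfrac14\big(|\omega_x|^2-|\omega_y|^2-2i\langle\omega_x,\omega_y\rangle\big)$, and the sphere confinement and rational structure are then extracted from further holomorphic quantities built out of the isotropic vector $\omega_z$. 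You instead compactify to $\S^2$, identify the image as a branched CMC-$1$ surface, and invoke Hopf's theorem (holomorphic Hopf differential, vanishing of quadratic differentials in genus zero, total umbilicity). Both routes are legitimate; yours is more geometric and transparent, at the price of having to handle the branch points with care.

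Three places in your outline deserve more than the sentence you give them. First, \eqref{eq:omega_conf} does not ``hold pointwise'' merely because $\omega$ is smooth: weak conformality is itself a theorem, proved precisely by the Liouville argument just mentioned (the paper asserts it before the theorem, so you may quote it, but a self-contained proof must include it). Second, $\Phi=\langle\omega_{zz},N\rangle$ is undefined at branch points because $N$ is; before extending $\Phi\,dz^2$ you need that the zero set of $\omega_z$ is discrete with finite-order vanishing (similarity principle applied to $\partial_{\bar z}\omega_z=i\,\omega_{\bar z}\wedge\omega_z$), after which $N$ extends continuously and the isolated singularities of $\Phi$ are removable by boundedness. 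Third, ``a connected closed totally umbilic surface is a round sphere'' is a statement about immersions, whereas $\omega$ is only a branched immersion; the clean conclusion is obtained by showing $\nabla(\omega+N)=0$ on the connected open set where $\nabla\omega\neq 0$ and extending by continuity, which gives $|\omega-b|\equiv 1$ directly. With these repairs the argument is complete, and the orientation discussion and the degree--energy computation in your final paragraphs are correct as stated.
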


We refer to $k\in \N$ as in Theorem \ref{thm:BC} as the \textit{degree} of $\omega$, since it coincides with the topological degree of $\omega$,  viewed as a map between two-dimensional unit spheres.

Theorem \ref{thm:BC} shows that the collection of bubbles can be seen as a disjoint union of the smooth, finite-dimensional manifolds
\begin{equation}\label{eq:M_k}
\mc M_k:=\left\{(P,Q,b)\in \C[z]\times \C[z]\times \R^3: \text{at least one of } P,Q \text{ is monic and \eqref{eq:condPQ} holds}\right\}\,.
\end{equation}
An important problem is to understand the behavior of $\mc E$ near its critical points. To be more concrete, for any bubble $\omega\in \mc M_k$, we regard the second variation of $\mc E$ at $\omega$ as a linear operator $\mc E''(\omega)\colon \dot H^1(\R^2;\R^3)\to \dot H^{-1}(\R^2 ;\R^3)$. 
 In view of \eqref{eq:first_variation} it is explicitly given by
\begin{equation}\label{eq:linearized_operator}
\mathcal{E}''(\omega)[u]:=\frac{\d}{\d t}\Big|_{t=0}\mathcal{E}'(\omega+tu)=-\Delta u+2(\omega_x\wedge u_y+u_x\wedge \omega_y)\,.	
	\end{equation}
Variations tangent to $\mc M_k$ at $\omega$ generate elements in the kernel of $\mc E''(\omega)$, so 
 that
$$\dim \ker \mc E''(\omega)\geq \dim \mc M_k = 4k+5\,.$$
This leads us to the following standard concept:

\begin{definition}
A bubble $\omega$ with degree $k$ is said to be \textit{degenerate} if $\dim \ker \mc E''(\omega)>4k+5\,,$ and it is said to be \textit{non-degenerate} otherwise.
\end{definition}

As each $\mc M_k$ is smooth, non-degeneracy is equivalent to the usual notion of integrability, 
see \textit{e.g.}\ \cite[Section~3.13]{Simon1996}.
Due to the conformal invariance of $\mc E$, we can and will regard each bubble as a map $\omega\colon \S^2\to \R^3$.

In \cite[Lemma 5.5]{Isobe2001a} it was shown that  bubbles with degree one are non-degenerate, see further \linebreak \cite[Appendix]{Chanillo05} and \cite[Section 3]{Musina04}, while in \cite[Theorem 1.1]{SireWeiZhengZhou23} it was shown that the \textit{standard $k$-bubble}, corresponding to the choice $P(z)=z^k$ and $Q(z)=1$ in \eqref{form_of_omega}, is non-degenerate as well.  These works also raise the conjecture that all bubbles should be non-degenerate, cf.\ \cite[page 190]{Chanillo05} and \cite[page 4]{SireWeiZhengZhou23}. 
Surprisingly, in this note we observe that although this is generically the case, for $k\geq 3$ there is an \textit{exceptional set of degenerate bubbles} in $\mc M_k$:

\begin{theorem}[Characterization of degenerate bubbles]\label{thm:charact}
Let $\omega\colon \S^2\to \R^3$ be a bubble as in \eqref{form_of_omega} whose set of branch points is given by
$$\{|\n \omega| = 0 \} =:  \{p_1, \dots,p_n\}.$$
Let $z$ be a conformal coordinate on $\S^2$ with respect to which none of the branch points is 
 $\infty$. 
 Then $\omega$ is degenerate if and only if there exists a non-zero polynomial $R\in \C[z]$ with $\deg R \leq n-4$ such that the meromorphic function $h\colon \C\to \C$ defined by
\begin{equation*}
h(z) := \frac{R(z)}{(z-p_1)\dots(z-p_n)}
\end{equation*}
solves the algebraic system of equations
$$\Res_{p_j} \bigg(\frac{h}{(P/Q)'}\bigg) = 0 \quad \text{for } j\in \{1,\dots, n\}\,.$$
\end{theorem}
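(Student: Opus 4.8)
The plan is to linearize \eqref{eq:equation_for_bubbles} in a conformal coordinate $z$ and to exploit that $\omega(\S^2)$ is a round sphere, so that $N:=\omega-b=\pi(f)$, with $f:=P/Q$, is at once the position vector and the Gauss map. With $\Delta=4\partial_z\partial_{\bar z}$ and $\omega_z\wedge\omega_{\bar z}\parallel N$, the Jacobi equation $\mathcal E''(\omega)[u]=0$ takes the form
\[
u_{z\bar z}=i\big(N_{\bar z}\wedge u_z-N_z\wedge u_{\bar z}\big).
\]
Away from the branch points — the zeros of $N_z$, equivalently of $f'$ — I would expand $u=aN_z+\bar aN_{\bar z}+\psi N$ with $a$ complex and $\psi$ real, and project onto the moving frame $\{N,N_z,N_{\bar z}\}$. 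Using the structure equations $N_{z\bar z}=-eN$ and $N_{zz}=(\partial_z\log e)N_z$ together with the Liouville identity $\partial_z\partial_{\bar z}\log e=-e$, where $e:=|N_z|^2=2|f'|^2/\rho^2$ and $\rho:=1+|f|^2$, a direct computation shows that the system \emph{decouples} into a normal equation $\partial_z\partial_{\bar z}\psi+e\psi=0$ and a tangential equation $\partial_z\!\big(e\,\partial_{\bar z}a\big)=0$. Consequently $\ker\mathcal E''(\omega)$ is the direct sum of the spaces of normal and of tangential Jacobi fields.

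The second step is to isolate the trivial, $(4k+5)$-dimensional part. The tangential fields with $e\,\partial_{\bar z}a=0$ are those with $a$ holomorphic and poles controlled by the zeros of $N_z$; these are exactly the infinitesimal reparametrizations $a=\dot f/f'$, i.e.\ the tangent space at $f$ to the degree-$k$ rational maps, of real dimension $4k+2$. The three translations yield the normal solutions $\psi=\langle c,\pi(f)\rangle$, $c\in\R^3$ — the pullbacks under $f$ of the first spherical harmonics — contributing $3$. Thus $\omega$ is non-degenerate precisely when the two scalar equations admit no further solution that extends smoothly across the branch points and at $\infty$, and the whole question reduces to counting these extra solutions.

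The heart of the matter is to turn the normal equation into the stated residue condition (the tangential equation is handled in parallel). Since $N$ is a conformal branched cover, away from the branch points it is a local isometry onto the round sphere, so $\partial_z\partial_{\bar z}\psi+e\psi=0$ is the pullback by $f$ of $\Delta_{\S^2}\phi+2\phi=0$, whose only global solutions are the three coordinate functions. An extra normal Jacobi field must therefore arise from a solution $\phi$ that is singular at a branch value $f(p_j)$ but whose pullback under the $m_j$-fold branching $w-f(p_j)\sim(z-p_j)^{m_j}$ is smooth. I would encode this singular data as a meromorphic quadratic differential $q=h\,(\mathrm{d}z)^2$ on $\mathbb{CP}^1$ with at most simple poles at the $p_j$. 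Regularity at $\infty$ — which, by hypothesis, is not a branch point — forces $q$ to be holomorphic there; as $(\mathrm{d}z)^2$ has a pole of order four at $\infty$, this means exactly $h=R/\prod_j(z-p_j)$ with $\deg R\le n-4$, a space of complex dimension $n-3$. The obstruction to gluing the local solutions into a single-valued smooth field across $p_j$ is the residue there of the meromorphic one-form $q/\mathrm{d}f=(h/f')\,\mathrm{d}z$, whose vanishing is precisely $\Res_{p_j}(h/(P/Q)')=0$. Conversely each such $h$ is realized by a genuine extra Jacobi field, so $\omega$ is degenerate if and only if a nonzero such $h$ exists.

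I expect the branch-point analysis to be the main obstacle. One must solve the scalar equation to high enough order near each $p_j$ to separate the component genuinely obstructed by a residue from the freely extendable component and from the reparametrization component already exhausted by the $4k+5$ trivial fields, and one must check the behaviour at the poles of $f$ and at $\infty$, where the frame $\{N,N_z,N_{\bar z}\}$ likewise degenerates. A second delicate point is the bookkeeping of the linear relations: the residue theorem gives $\sum_j\Res_{p_j}(q/\mathrm{d}f)=0$, and the three translation solutions should account for the remaining relations, so that the $n$ residue conditions are effectively $n-3$ on an $(n-3)$-dimensional space. This is what makes solvability of the system — hence degeneracy — a genuinely exceptional, positive-codimension phenomenon, possible only once $n\ge4$, that is for $k\ge3$.
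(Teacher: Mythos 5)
Your reduction is the same as the paper's: you split $\ker\mathcal E''(\omega)$ into a tangential part, identified with infinitesimal variations of the rational map (dimension $4k+2$), and a normal part $\psi\omega$ with $\psi$ solving the Schr\"odinger equation $\Delta\psi+|\nabla\omega|^2\psi=0$, whose trivial solutions are the three components of $\omega$; degeneracy then means this equation has extra solutions. Two remarks on this step. First, your claim that the tangential solutions are \emph{exactly} the reparametrizations $a=\dot f/f'$ is precisely the integrability of tangential conformal Jacobi fields for harmonic maps into $\S^2$; this is a genuine theorem (the paper invokes Gulliver's Lemma~3), not something that falls out of writing $\partial_z(e\,\partial_{\bar z}a)=0$ --- that equation also admits $\partial_{\bar z}a=\bar g/e$ with $g$ antiholomorphic, and ruling these out requires the regularity analysis at the branch points that you defer. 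Second, the decoupling itself should be checked against the $H$-system linearization (not the harmonic-map Jacobi operator, which has a different lower-order term); the paper does this via the ``conformal Jacobi field'' identity $\langle\omega_z,u_z\rangle=0$ and a direct computation showing $f\omega$ solves the linearized system iff $f\in N(\omega)$.

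The genuine gap is in the heart of the argument: the bijection between nontrivial normal solutions $\psi$ and meromorphic functions $h=R/\prod_j(z-p_j)$ with $\deg R\le n-4$ and $\Res_{p_j}(h/\varphi')=0$. You correctly guess all the objects (the quadratic differential $h\,(\mathrm dz)^2$, the degree bound from the order-four pole of $(\mathrm dz)^2$ at $\infty$, the one-form $h/f'\,\mathrm dz$), but you never say what the map $\psi\mapsto h$ is, and the mechanism you propose --- a solution of $\Delta_{\S^2}\phi+2\phi=0$ singular at the branch values whose pullback is smooth, with the residue as a ``gluing obstruction'' --- is not correct as stated: away from branch points $f$ is a covering, so a global solution on the source is only locally a pullback and need not come from any single-valued (even singular) $\phi$ on the target. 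The paper's route is the Gauss parametrization: to $\psi\in N(\omega)$ one associates the branched minimal immersion $X=\psi\omega+\nabla_{g_\omega}\psi$ with Gauss map $\omega$, reads off $h_\psi=\psi_{zz}-\bigl[\log|\omega_z|^2\bigr]_z\psi_z$ from $X_z=h_\psi\,\omega_{\bar z}/|\omega_z|^2$, obtains meromorphy and simplicity of the poles from smoothness of $\psi$, the residue conditions from single-valuedness of $X$, and, conversely, recovers $\psi=\langle X_h,\omega\rangle$ from the period-free Weierstrass-type integral $X_h=\mathrm{Re}\int h\,\omega_{\bar z}/|\omega_z|^2\,\mathrm dz$, the key check being boundedness of $\langle X_h,\omega\rangle$ near each $p_j$. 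Without this construction (or an equivalent one) both directions of the stated equivalence remain unproved; you yourself flag the branch-point analysis as the main obstacle, and it is exactly where the content lies.
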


We note that Theorem \ref{thm:charact} implies that the set of degenerate bubbles of degree $k$ has (real) codimension at least two in $\mc M_k$, see the comment after \cite[Theorem 2]{EjiriKotani92}.
The above characterization follows from a correspondence between nontrivial elements in the kernel of $\mathcal E''[\omega]$ and solutions of a well-studied Schrödinger equation on $\S^2$, \textit{cf.}\ \cite{Ejiri94,Ejiri98,EjiriKotani92,EjiriKotani93,Kotani97,MontielRos91,Nayatani93}.  As immediate consequences of Theorem \ref{thm:charact} we obtain the following:

\begin{corollary}[Examples of non-degenerate bubbles]\label{cor:exnondeg}
Let $\omega$ be a degree $k$ bubble as in \eqref{form_of_omega}. If $\omega$ is degenerate then it has at least 4 branch points. In particular:
\begin{enumerate}
\item\label{it:deg2} if $k\leq 2$ then $\omega$ is non-degenerate\,;
\item\label{it:standard} the standard $k$-bubble corresponding to 
$$P(z)=z^k, \quad Q(z)=1\,,$$ is non-degenerate for all $k\in \N$.
\end{enumerate}
\end{corollary}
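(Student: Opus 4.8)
The plan is to read everything off the characterization in Theorem~\ref{thm:charact}. The central observation is that degeneracy \emph{forces} branch points: by Theorem~\ref{thm:charact}, if $\omega$ is degenerate then there exists a \emph{non-zero} polynomial $R\in\C[z]$ with $\deg R\le n-4$. Since a non-zero polynomial has a well-defined degree which is at least $0$, the inequality $0\le\deg R\le n-4$ is only possible when $n-4\ge 0$, that is $n\ge 4$. This already proves the main assertion of the corollary, and the two special cases will follow by showing that the number of branch points stays strictly below $4$.

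To make the branch-point count meaningful I would first record the geometric content of the set $\{|\n\omega|=0\}$. Writing $\omega=\pi\circ f+b$ with $f=P/Q$ a holomorphic map $\S^2\to\S^2$ of degree $k$ and $\pi$ the inverse stereographic projection (a conformal diffeomorphism, so $\mathrm D\pi$ is everywhere invertible), the condition $|\n\omega|=0$ is equivalent to the vanishing of $f'$ in any local conformal coordinate. Hence the branch points of $\omega$ are precisely the ramification points of $f$. Since this set is intrinsic to the map $\omega\colon\S^2\to\R^3$, its cardinality $n$ does not depend on the chosen conformal coordinate; in particular it may be computed in the standard coordinate even when the hypothesis of Theorem~\ref{thm:charact} that no branch point be $\infty$ requires a preliminary M\"obius change of variables.

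For part~\eqref{it:deg2} I would invoke the Riemann--Hurwitz formula for the degree-$k$ branched cover $f\colon\S^2\to\S^2$, which gives $\sum_p(e_p-1)=2k-2$. As each branch point contributes $e_p-1\ge 1$, the number of distinct branch points satisfies $n\le 2k-2$, so for $k\le 2$ we obtain $n\le 2<4$ and hence non-degeneracy by the first part. For part~\eqref{it:standard}, I would instead compute directly: for $f(z)=z^k$ one has $f'(z)=kz^{k-1}$, vanishing only at $z=0$, while a local computation at $\infty$ (in the coordinate $s=1/z$, where $f$ reads $s\mapsto s^k$) shows $\infty$ is the only other ramification point; thus $n\le 2<4$ for every $k$, and non-degeneracy again follows from the first part.

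Once Theorem~\ref{thm:charact} is available the argument is essentially immediate, so I do not expect a genuine obstacle. The only point deserving care is the bookkeeping in the second paragraph: verifying that $\{|\n\omega|=0\}$ coincides with the ramification locus of $f$ and that its cardinality $n$ is coordinate-independent, so that the Riemann--Hurwitz bound (and the explicit count for $z^k$) legitimately controls the quantity $n$ that appears in the statement of Theorem~\ref{thm:charact}.
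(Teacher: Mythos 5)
Your proposal is correct and follows essentially the same route as the paper: the main claim comes from the observation that a non-zero $R$ with $\deg R\le n-4$ forces $n\ge 4$, part \ref{it:deg2} from the Riemann--Hurwitz count $\sum_j(m_j-1)=2(k-1)$, and part \ref{it:standard} from the fact that the standard bubble ramifies only at $0$ and $\infty$. The extra care you take in identifying $\{|\n\omega|=0\}$ with the ramification locus of $\varphi=P/Q$ and checking coordinate-independence is sound but left implicit in the paper.
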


Other non-degenerate examples can be inferred from  \cite{MontielRos91}, see \textit{e.g.}\ \cite[Corollary 15]{MontielRos91}.

\begin{corollary}[Examples of degenerate bubbles]\label{cor:exdeg}
Let $\omega$ be a degree $k$ bubble as in \eqref{form_of_omega}. If $k=3$ then $\omega$ is degenerate if and only if
$$P(z) = z^3+2, \quad Q(z) = z\,,$$
up to a M\"obius transformation of the sphere.
\end{corollary}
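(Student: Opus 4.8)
The plan is to feed the degree-$3$ case into Theorem~\ref{thm:charact}, using that the number of branch points is now rigidly constrained. By Riemann--Hurwitz a rational map $f=P/Q\colon\mathbb{CP}^1\to\mathbb{CP}^1$ of degree $3$ has total ramification $2\cdot 3-2=4$, hence at most four branch points, with equality exactly when all of them are simple; Corollary~\ref{cor:exnondeg} forces a degenerate bubble to have at least four. Thus a degenerate degree-$3$ bubble has exactly $n=4$ distinct simple branch points $p_1,\dots,p_4$, which by the choice of coordinate in Theorem~\ref{thm:charact} we take to be finite. Now $\deg R\le n-4=0$, so $R$ is a nonzero constant and $h$ is determined up to scaling. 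Writing $W:=P'Q-PQ'$ for the Wronskian — a polynomial of degree $2\cdot3-2=4$ vanishing simply exactly at $p_1,\dots,p_4$, so that $W=\gamma\prod_j(z-p_j)$ and $f'=W/Q^2$ — one gets $h/f'=\mathrm{const}\cdot Q^2/W^2$, and degeneracy becomes
\begin{equation*}
\Res_{p_j}\Big(\frac{Q^2}{W^2}\Big)=0,\qquad j=1,\dots,4 .
\end{equation*}

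Next I would make explicit that this condition is Möbius invariant, in the domain and in the target; this is what makes the problem tractable. Domain invariance is the conformal invariance of $\mathcal E$. For the target, since $W(p_j)=0$ forces $Q'(p_j)/Q(p_j)=P'(p_j)/P(p_j)=:\lambda_j$, the double-pole residue formula gives, for any polynomial $g$ with $g'(p_j)/g(p_j)=\lambda_j$,
\begin{equation*}
\Res_{p_j}\Big(\frac{g^2}{W^2}\Big)=\frac{g(p_j)^2}{W'(p_j)^2}\,\Lambda_j,\qquad \Lambda_j:=2\lambda_j-\frac{W''(p_j)}{W'(p_j)} .
\end{equation*}
Post-composing $f$ with a Möbius transformation replaces $Q$ by a combination $\gamma P+\delta Q$ and multiplies $W$ by a constant, while leaving $\lambda_j$ and hence $\Lambda_j$ unchanged; so degeneracy is equivalent to $\Lambda_j=0$ for all $j$, independently of the target coordinate. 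I would exploit this as follows. Since a fibre of a degree-$3$ map cannot contain two ramification points (that would need local degrees summing to at least $2+2>3$), the four critical values of $f$ are distinct; post-composing with a Möbius transformation sending one of them to $\infty$ turns the corresponding branch point, say $p_4$, into a double pole of $f$, while keeping the other three finite. A domain Möbius transformation then puts $p_4=0$ and the remaining simple pole at $\infty$, giving the normal form
\begin{equation*}
f=\frac{az^3+bz^2+cz+e}{z^2},\qquad a,e\neq0 .
\end{equation*}

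For this normal form $W=z\,(az^3-cz-2e)$, so the branch points are $p_4=0$ and the three roots of $U:=az^3-cz-2e$, and $Q^2/W^2=z^2/U^2$. At $p_4=0$ this function vanishes, so its residue condition is automatic; at each root $p$ of $U$ the residue formula turns $\Res_p(z^2/U^2)=0$ into $p\,U''(p)=2U'(p)$, that is $6ap^2=6ap^2-2c$, whence $c=0$. Hence degeneracy is equivalent to $c=0$, in which case $f=(az^3+bz^2+e)/z^2$; a target translation removes $b$ and a domain together with a target scaling normalise $a,e$, bringing $f$ to $(2z^3+1)/z^2$, which coincides with $(z^3+2)/z$ under the coordinate change $z\mapsto 1/z$. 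Conversely, if $c=0$ all four residues vanish and Theorem~\ref{thm:charact} yields degeneracy. This proves both directions.

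The routine part is the final residue computation, which collapses to the single scalar equation $c=0$; the main obstacle is rather the conceptual step that legitimises the normalisation, namely the Möbius invariance of the degeneracy condition. I would therefore spend most care on the displayed identity for $\Res_{p_j}(Q^2/W^2)$ and on its borderline version when a branch point is a pole of $f$ — there $Q(p_j)=0$ and $Q^2/W^2$ has a zero rather than a pole at $p_j$, so the residue vanishes automatically and the formula for $\Lambda_j$ must be interpreted accordingly. Once these are secured, together with the elementary observation that some branch point can be sent to a double pole, the reduction of the four residue equations to $(z^3+2)/z$ is immediate.
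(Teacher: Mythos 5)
Your overall route is the same as the paper's (which reduces to the four simple branch points and the residue system and then defers the ``elementary computations'' to Montiel--Ros); you actually carry those computations out, and the normal-form calculation itself ($W=zU$, residue at a root $p$ of $U$ proportional to $pU''(p)-2U'(p)=-2c$, hence degeneracy $\iff c=0$, hence $\varphi\sim(2z^3+1)/z^2\sim(z^3+2)/z$) is correct. The gap is in the step you yourself flag and then wave through: the target-M\"obius invariance at a branch point that is a pole of one of the two representations. Your identity $\Res_{p_j}(g^2/W^2)=\tfrac{g(p_j)^2}{W'(p_j)^2}\Lambda_j$ with $\Lambda_j=2\lambda_j-W''(p_j)/W'(p_j)$ proves equivalence of the conditions only at branch points where \emph{both} denominators are nonzero. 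At the exceptional point $p_4$ the situation is asymmetric: in the normal form ($Q=z^2$, double zero at $p_4=0$) the residue vanishes for trivial order-of-vanishing reasons, but after post-composing with a M\"obius map whose denominator is $\gamma P+\delta Q$ with $\gamma\neq 0$, the function $(\gamma P+\delta Q)^2/W^2$ has a genuine double pole at $p_4$ and its residue is a nonzero multiple of $2P'(p_4)/P(p_4)-W''(p_4)/W'(p_4)$ --- a nontrivial condition, not ``automatic''. So ``degeneracy $\iff\Lambda_j=0$ for all $j$, independently of the target coordinate'' is not established by your argument, and the ``if'' direction of the corollary (every pre- and post-M\"obius transform of $(z^3+2)/z$ is degenerate) is left open at exactly this point.

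The gap is closable with one more line that you should add: in the normal form $P=az^3+bz^2+cz+e$, $W=az^4-cz^2-2ez$, one has $2P'(0)/P(0)-W''(0)/W'(0)=2c/e-c/e=c/e$, which vanishes precisely when $c=0$. Thus on the degenerate locus the extra residue condition created at $p_4$ by an arbitrary target M\"obius transformation is satisfied, and conversely (for the ``only if'' direction) your argument already survives because at most one branch point can lie over $\infty$ in any representation, so at least two of the three roots of $U$ are non-poles in both representations and each one alone forces $c=0$. With that supplement your proof is complete and is, in substance, the computation the paper outsources to \cite[page 171]{MontielRos91}; note also that the paper's displayed condition $\Res_{p_j}(1/\varphi')=0$ in its proof is shorthand for the double-pole condition $\Res_{p_j}(Q^2/W^2)=0$ that you correctly extracted from Theorem \ref{thm:charact}.
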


An interesting question that can subsequently be pursued is to prove optimal \L ojasiewicz inequalities for degenerate bubbles.  There are only a few examples of variational problems where this is achieved  \cite{Frank2023,Frank2023a}. We also refer the reader to \cite{Malchiodi2020} for a related result, where \L ojasiewicz inequalities near the simplest possible bubble tree in a surface with positive genus were obtained.   Such inequalities determine in a precise way the leading order behavior of $\mc E$ near a bubble, and this information is useful in arguments involving blow-up or perturbative analysis, see \textit{e.g.}\ \cite{Brezis1985,Caldiroli04,Caldiroli04reduc,Caldiroli2006,Chanillo05,Isobe2001a, Isobe2001b,SireWeiZhengZhou23,Struwe1985}.

\subsection*{Acknowledgments}

AG was supported by Dr.\ Max R\"ossler, the Walter H\"afner Foundation and ETH Z\"urich Foundation. 
 XL was supported by 
the ANR project ANR-22-CE40-0006. 
KZ was supported by the Deutsche Forschungsgemeinschaft (DFG, German Research Foundation) through the Sonderforschungsbereich (SFB) 1060 and the Hausdorff Center for Mathematics (HCM) under Germany's Excellence Strategy -EXC-2047/1-390685813.

\section{Characterization of degenerate bubbles}

As explained in the introduction, the non-degeneracy of bubbles (cf.\ Definition \ref{def:bubbles})  is characterized through the study of the kernel of the linearized operator $\mathcal{E}''(\omega)\colon\dot H^{1}(\R^2;\R^3)\to \dot H^{-1}(\R^2;\R^3)$
defined in \eqref{eq:linearized_operator}.
In other words, we are interested in classifying the space of solutions $u\in \dot H^1(\R^2;\R^3)$ to
\begin{align}\label{eq:linHsyst}
\Delta u  =2(\omega_x\wedge u_y +u_x\wedge \omega_y)\  \text{ in } \mathcal{D}'(\R^2)\,
\end{align}
for an arbitrary bubble $\omega$. 
We note that, by elliptic regularity, any such solution is necessarily smooth.

Given a bubble $\omega$ as in \eqref{form_of_omega}, we can assume without loss of generality that $b=0$, since $\mc E$ is invariant under translations in the target. Then, due to the conformal invariance of $\mc E$, we can regard bubbles as maps $\omega \colon \mb S^2\to \mb S^2\subset \R^3$.  More explicitly, given a bubble $\omega$, we will write
\begin{equation}\label{eq:defomega}
\omega(z) = \pi(\varphi(z))\,, \qquad \varphi(z) := \frac{P(z)}{Q(z)}\,, \qquad \deg P \leq \deg Q\,,
\end{equation}
where $\pi$ is as in \eqref{eq:stereo} and $P,Q\in \C[z]$ for some conformal coordinate $z$ on $\mb S^2$; the assumption on the degrees of $P,Q$ can always be achieved by choosing a suitable coordinate $z$.  

We find it convenient to use complex notation, and henceforth we will write 
\[\p_z := (\p_x - i \p_y)/2\,, \quad \p_{\bar z} := (\p_x + i \p_y)/2\] for the usual Wirtinger derivatives.  Let the Euclidean inner product $\langle\cdot, \cdot\rangle$ in $\R^3$ be extended as a complex-bilinear form on $\C^3$.  Then the fact \eqref{eq:omega_conf} that each bubble is weakly conformal is expressed concisely as
\begin{equation}
\label{eq:holomw}
\langle \omega_z,\omega_z\rangle = 0 \ \text{in } \C\,,
\end{equation}
and the linearization of this equation leads us,
as in \cite[Section 2]{Gulliver89},
to the following:

\begin{definition}
Given a bubble $\omega\colon\S^2\to \S^2$ and a vector field $v\in \dot H^1(\S^2;\R^3)$, we say that $v$ is a \textit{conformal Jacobi field along}  $\omega$ if $\langle \omega_z,v_z\rangle = 0$\,.
\end{definition}

We can also express equations \eqref{eq:equation_for_bubbles} and \eqref{eq:linHsyst} respectively as
\begin{align}
\label{eq:complexHsys}
\omega_{z\bar z} & =  i\, \omega_{\bar z}\wedge \omega_z\,,\\[3pt]
u_{z \bar z} & = i (u_{\bar z}\wedge \omega_z + \omega_{\bar z} \wedge u_z)\,,
\label{eq:complexlinHsys}
\end{align}
in complex notation.
We then have the  following: 

\begin{lemma}\label{lem:confjacobi}
Let $u\colon \S^2\to \R^3$ solve \eqref{eq:complexlinHsys} for a bubble $\omega$. Then $u$ is a conformal Jacobi field.
\end{lemma}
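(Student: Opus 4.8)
The plan is to prove that the complex-valued function $f:=\langle \omega_z,u_z\rangle$ vanishes identically, since this is exactly the assertion that $u$ is a conformal Jacobi field. I would do this in two stages: first, show that $f$ is holomorphic on $\C$ (equivalently, that the quadratic differential $f\,\d z^2$ is holomorphic on $\C$); and second, show that it extends holomorphically across the point at infinity, so that $f\,\d z^2$ is a global holomorphic quadratic differential on $\S^2$ and must therefore vanish.

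For the first stage I would simply differentiate. Extending both $\langle\cdot,\cdot\rangle$ and $\wedge$ complex-bilinearly and using the Leibniz rule,
\[
\p_{\bar z} f = \langle \omega_{z\bar z},u_z\rangle + \langle \omega_z, u_{z\bar z}\rangle\,,
\]
I substitute the bubble equation \eqref{eq:complexHsys} and the linearized equation \eqref{eq:complexlinHsys}. This produces scalar triple products of the form $\langle a, b\wedge c\rangle = \det(a,b,c)$, an identity valid complex-trilinearly after the bilinear extension. The contribution of $u_{\bar z}\wedge\omega_z$ gives $\langle\omega_z,u_{\bar z}\wedge\omega_z\rangle=\det(\omega_z,u_{\bar z},\omega_z)=0$, which vanishes because of the repeated vector $\omega_z$; the two surviving terms are
\[
i\,\langle \omega_{\bar z}\wedge\omega_z,u_z\rangle = i\det(\omega_{\bar z},\omega_z,u_z) \quad\text{and}\quad i\,\langle \omega_z,\omega_{\bar z}\wedge u_z\rangle = i\det(\omega_z,\omega_{\bar z},u_z)\,,
\]
which are negatives of each other since they differ by the transposition of the first two slots of the determinant. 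Hence $\p_{\bar z} f\equiv 0$ and $f$ is holomorphic on all of $\C$, including at branch points. (Note that this uses only the algebraic antisymmetry of the triple product, not the conformality \eqref{eq:holomw} of $\omega$.)

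For the second stage I would pass to the chart $\zeta=1/z$ near infinity. Since $\omega_\zeta=-z^2\omega_z$ and $u_\zeta=-z^2u_z$, the corresponding quantity in the new chart is $\langle\omega_\zeta,u_\zeta\rangle=z^4 f(z)$, which is precisely the statement that $f\,\d z^2$ is a quadratic differential. Because $\omega$ and $u$ are smooth maps on all of $\S^2$ — for $\omega$ this is immediate from \eqref{form_of_omega}, and for $u$ it is the elliptic regularity on the closed surface $\S^2$ already recorded before the lemma — the function $\langle\omega_\zeta,u_\zeta\rangle$ is bounded near $\zeta=0$, forcing $f(z)=O(|z|^{-4})$ as $z\to\infty$. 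An entire function tending to $0$ at infinity is bounded, hence constant by Liouville's theorem, and the constant must be $0$. Therefore $f\equiv 0$, as desired; equivalently, one may invoke that $\S^2\cong\mathbb{CP}^1$ carries no nonzero holomorphic quadratic differential.

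I expect the only genuinely delicate point to be the regularity invoked in the second stage: one must know that $u$ extends across the north pole with $u_\zeta$ bounded there, so that $\langle\omega_\zeta,u_\zeta\rangle$ is controlled at $\zeta=0$. Once this is granted — and it is, by the elliptic regularity theory for \eqref{eq:complexlinHsys} on $\S^2$ — the argument reduces to the elementary holomorphicity computation together with a one-line Liouville argument. The algebraic cancellation in the first stage is routine, so the real content of the lemma lies in combining holomorphicity with the behavior at infinity.
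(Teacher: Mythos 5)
Your argument is correct, and its first stage is exactly the paper's computation: differentiating $\langle\omega_z,u_z\rangle$ in $\bar z$, substituting \eqref{eq:complexHsys} and \eqref{eq:complexlinHsys}, and killing the resulting triple products by antisymmetry of the determinant. Where you diverge is in how you conclude that the resulting entire function vanishes. You pass to the chart $\zeta=1/z$, use the quadratic-differential transformation law together with smoothness of $u$ on all of $\S^2$ to get $f(z)=O(|z|^{-4})$, and then apply Liouville; as you yourself note, this makes the regularity of $u$ at the north pole the load-bearing step. The paper instead observes that $\langle\omega_z,u_z\rangle\in L^1(\R^2)$ directly from Cauchy--Schwarz and the finite-energy hypothesis $\omega,u\in\dot H^1(\R^2;\R^3)$, and an integrable entire function is identically zero by the mean-value property. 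The two routes are equally valid, but the paper's is a bit more economical: it needs no information about $u$ at infinity beyond membership in $\dot H^1$, whereas yours requires first upgrading the a priori $\dot H^1$ solution to a smooth map on the closed sphere (which does hold, by conformal invariance of \eqref{eq:complexlinHsys} and Wente-type regularity, but is an extra input). If you keep your version, you should make that regularity step explicit rather than deferring it to the remark preceding the lemma, which only addresses smoothness on $\R^2$.
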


\begin{proof}
We use \eqref{eq:complexHsys} and \eqref{eq:complexlinHsys} to compute
\begin{align*}
\p_{\bar z} \langle \omega_z,u_z\rangle
& = 
i \left(
\langle \omega_{\bar z} \wedge \omega_z, u_z\rangle  + \langle \omega_z, u_{\bar z} \wedge \omega_z+ \omega_{\bar z}\wedge u_z \rangle
\right) \\
&
= i \left(\langle \omega_{\bar z} \wedge \omega_z, u_z\rangle  + \langle \omega_z,  \omega_{\bar z}\wedge u_z \rangle \right)= 0\,,
\end{align*}
where the last equality is simply an algebraic identity resulting from $\langle a, b\wedge c\rangle = \det(a| b |c)$, where $(a| b |c)$ denotes the $3\times 3$ matrix with column vectors $a,b,c$ (in this order). Thus $\langle \omega_z, u_z\rangle$ is integrable and holomorphic, hence by Liouville's theorem it is zero.
\end{proof}

Note that each bubble $\omega$ as in \eqref{eq:defomega} is a \textit{harmonic map} into $\mb S^2$, \textit{i.e.},\
\begin{equation}\label{eq:harmonic_map}
\Delta \omega+|\nabla \omega|^2\omega=0 \quad \iff \quad \omega_{z \bar z} +|\omega_z|^2 \omega =0\,,
\end{equation}	
as can be checked directly by differentiating $|\omega|=1$, using the weak conformality  \eqref{eq:omega_conf} of $\omega$. Since the target is a 2-sphere, $\omega$ is an integrable harmonic map \cite{Gulliver89}; we note that this integrability does not hold for higher-dimensional spheres \cite{Lemaire2009}.  The following proposition is essentially a consequence of this integrability result.

\begin{proposition}[Decomposition]\label{prop:decomp}
Let $\omega\colon \S^2\to \S^2$ be a degree $k$ bubble
and
decompose maps 
 $u\in\ker\E''(\omega)$  according to
\begin{align*}
\ker \mc E''(\omega) 
&
= T(\omega)\oplus N(\omega)\,\omega\,,
\qquad
 u 
 =  \left[u -  \langle  u ,\omega \rangle \omega\right]+ \langle  u ,\omega \rangle \omega\,,
\end{align*}
where this decomposition is orthogonal pointwise
 with respect to the inner product of $\R^3$. The elements of $T(\omega)$ are generated by infinitesimal variations in the coefficients of $\varphi$ in \eqref{eq:defomega}, so that $\dim T(\omega) = 4k+2$, and 
\begin{equation}
\label{eq:defN}
N(\omega) := \left\{f\in C^\infty(\S^2;\R): \Delta f + |\nabla \omega|^2 f=0\right\}\,.
\end{equation}
\end{proposition}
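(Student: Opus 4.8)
The plan is to treat the normal and tangential components of $u$ separately, and the heart of the normal analysis is a decoupling identity. I would first show that for \emph{every} $u$,
\begin{equation*}
\langle \mc E''(\omega)[u],\omega\rangle = -\big(\Delta f + |\n\omega|^2 f\big), \qquad f:=\langle u,\omega\rangle,
\end{equation*}
so that the normal part of $\mc E''(\omega)[u]$ depends only on $f$. To derive this I expand $\Delta f = \langle\Delta u,\omega\rangle + 2\langle\n u,\n\omega\rangle + \langle u,\Delta\omega\rangle$, use the harmonic map equation \eqref{eq:harmonic_map} to rewrite $\langle\Delta u,\omega\rangle$ and $\langle u,\Delta\omega\rangle$, and evaluate the two cross terms in \eqref{eq:linearized_operator} by means of the pointwise identities $\omega_x\wedge\omega=\omega_y$ and $\omega\wedge\omega_y=\omega_x$. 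These follow from the weak conformality \eqref{eq:omega_conf} together with the orientation fixed by combining \eqref{eq:equation_for_bubbles} and \eqref{eq:harmonic_map} into $2\,\omega_x\wedge\omega_y=-|\n\omega|^2\omega$; with them the first-derivative contributions $\langle\n u,\n\omega\rangle$ cancel and the displayed formula remains. The same computation applied to a purely normal field yields $\mc E''(\omega)[f\omega]=-(\Delta f+|\n\omega|^2 f)\,\omega$.

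Granting the identity, the splitting is immediate: if $u\in\ker\mc E''(\omega)$ then $f=\langle u,\omega\rangle$ solves $\Delta f+|\n\omega|^2 f=0$, i.e.\ $f\in N(\omega)$; hence $f\omega\in\ker\mc E''(\omega)$ by the second computation, and therefore $u^T:=u-f\omega\in\ker\mc E''(\omega)$ as well. Since $u^T$ is tangent to $\S^2$ while $f\omega$ is normal, the decomposition $\ker\mc E''(\omega)=T(\omega)\oplus N(\omega)\,\omega$ is pointwise orthogonal, with $T(\omega)$ the tangential elements of the kernel and $N(\omega)$ as in \eqref{eq:defN}. I would note in passing that the three coordinate functions $\omega_1,\omega_2,\omega_3$ always lie in $N(\omega)$, being exactly the components of \eqref{eq:harmonic_map}.

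To identify $T(\omega)$ I would show that on tangential fields the linearized $H$-operator coincides with the Jacobi operator $J_\omega$ of $\omega$, regarded as a harmonic map into $\S^2$: $\mc E''(\omega)[v]=-J_\omega v$ whenever $\langle v,\omega\rangle\equiv0$. The cleanest route avoids matching the operators term by term. Writing $\mc E=E+2\mc V$ with $E(u)=\tfrac12\int_{\R^2}|\n u|^2\,\d\mc L^2$, fix a tangential $v$ and let $\omega_t$ be the radial projection of $\omega+tv$ onto $\S^2$; since $|\omega+tv|^2=1+t^2|v|^2$, the map $\omega_t$ is $\S^2$-valued and differs from $\omega+tv$ only at order $t^2$ in the normal direction. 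Because $\mc E'(\omega)=0$, this $O(t^2)$ normal discrepancy affects $\mc E$ only at order $t^3$, so $\tfrac{d^2}{dt^2}\mc E(\omega+tv)|_0=\tfrac{d^2}{dt^2}\mc E(\omega_t)|_0$; and since $\mc V$ is a homotopy invariant (equal to $\tfrac{4\pi}{3}$ times the degree) on $\S^2$-valued maps, the right-hand side reduces to the constrained second variation of $E$ at the harmonic map $\omega$, which is precisely $\langle -J_\omega v,v\rangle$. As $\mc E''(\omega)$ sends tangential fields to tangential fields (the identity of the first paragraph with $f=0$) and agrees with the symmetric operator $-J_\omega$ as a quadratic form, the two coincide, whence $T(\omega)=\ker J_\omega$ is the space of Jacobi fields of $\omega$.

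Finally I would invoke the integrability of harmonic two-spheres into $\S^2$ recalled before the statement \cite{Gulliver89}: the harmonic maps near $\omega$ form a smooth manifold whose tangent space at $\omega$ is exactly $\ker J_\omega$, and by the classification of such maps (they are $\pm$holomorphic, hence bubbles as in Theorem \ref{thm:BC}) this manifold is locally the space of degree-$k$ rational maps $\varphi=P/Q$, of real dimension $4k+2$. Its tangent space is generated by the infinitesimal variations of the coefficients of $\varphi$, and these indeed belong to $T(\omega)$, since differentiating any curve of bubbles $\pi(\varphi_t)$ solves \eqref{eq:linHsyst} and stays tangent to $\S^2$; this gives $\dim T(\omega)=4k+2$ and finishes the proof. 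The genuinely non-elementary input, and the step I expect to be the main obstacle, is this last one: the easy lower bound $\dim T(\omega)\ge 4k+2$ is upgraded to equality only through integrability, and care is needed to justify rigorously both the reduction $\mc E''(\omega)|_{\mathrm{tangent}}=-J_\omega$ (the projection estimate and the degree-constancy of $\mc V$) and the identification of the harmonic moduli space with $\mathcal M_k$; by contrast, the normal computation is routine.
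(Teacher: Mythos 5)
Your proof is correct, but it reaches the conclusion by a genuinely different route than the paper, most visibly in the treatment of $T(\omega)$. For the splitting itself, your decoupling identity $\langle \mc E''(\omega)[u],\omega\rangle=-(\Delta f+|\n\omega|^2f)$ with $f=\langle u,\omega\rangle$ is correct (the frame identities $\omega_x\wedge\omega=\omega_y$, $\omega\wedge\omega_y=\omega_x$ are exactly the paper's positively oriented frame $\bigl(\tfrac{\sqrt2\omega_x}{|\n\omega|},\tfrac{\sqrt2\omega_y}{|\n\omega|},-\omega\bigr)$), and it is in fact a slightly stronger and more direct statement than what the paper proves: the paper only computes $\mc E''(\omega)[f\omega]=-(\Delta f+|\n\omega|^2f)\,\omega$ in complex notation and deduces the orthogonal splitting from that together with the subsequent characterization of $T(\omega)$. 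Where the two arguments diverge is the upper bound $\dim T(\omega)\le 4k+2$. The paper stays at first order: by Lemma \ref{lem:confjacobi} every kernel element satisfies $\langle\omega_z,u_z\rangle=0$, so the tangential part $\tilde u$ is a tangential \emph{conformal} Jacobi field, and \cite[Lemma 3]{Gulliver89} states that such fields are exactly of the form \eqref{eq:tildeu}; the harmonic-map Jacobi operator never appears. You instead identify $\mc E''(\omega)$ on tangential fields with $-J_\omega$ via the radial-projection trick and the degree-invariance of $\mc V$ (both steps are sound, and the polarization argument closing the identification works because both operators are symmetric and preserve tangential fields by your own decoupling identity), and then invoke integrability of \emph{Jacobi fields} along harmonic two-spheres in $\S^2$, i.e.\ that the nullity of $J_\omega$ equals $4k+2$ for every degree-$k$ holomorphic map. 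That is a second-order and strictly stronger input than the first-order lemma the paper uses; it is the integrability theorem the paper alludes to in the surrounding discussion (and whose failure for higher-dimensional targets it also notes) but does not actually need in its proof, so you should make sure to cite it in the precise form you use. Your route buys a cleaner conceptual picture ($T(\omega)=\ker J_\omega$) and makes Lemma \ref{lem:confjacobi} unnecessary for this proposition; the paper's route is more elementary at this point, at the price of the short holomorphicity computation. Both approaches correctly leave the lower bound $\dim T(\omega)\ge 4k+2$ as the trivial direction and concentrate the difficulty in the upper bound, exactly as you observe.
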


\begin{proof}
Let us note that the equation defining $N(\omega)$ can be written in complex notation as
\begin{equation}
\label{eq:complexSchrod}
f_{z\bar z } + |\omega_z|^2 f=0\,.
\end{equation}
We now observe that $f\in N(\omega)$ if and only if the map $f\omega$ solves \eqref{eq:complexlinHsys}, since 
\begin{align*}
& (f \omega)_{z\bar z} -  i \big((f \omega)_{\bar z} \wedge \omega_z + \omega_{\bar z} \wedge (f \omega)_z\big) \\
& = f_{z \bar z}\, \omega - i f \omega_{\bar z} \wedge \omega_z + f_z \omega_{\bar z} + f_{\bar z} \omega_z - i \left(f_{\bar z} \omega \wedge \omega_z + f_z \omega_{\bar z} \wedge \omega\right)\\
&  = \left[ f_{z \bar z} +|\omega_z|^2 f \right]\omega\,,
\end{align*}
where  in the second line we used \eqref{eq:complexHsys} and in the last line we used the identities \[\omega_z = i \,\omega\wedge \omega_z\,,  \ \omega_{\bar z} = i\, \omega_{\bar z} \wedge \omega\,, \ \text{and } i \,\omega_z \wedge \omega_{\bar z} = |\omega_z|^2 \omega\,, \]
which follow from the fact that $\left(\frac{\sqrt 2\omega_x}{|\nabla \omega|},\frac{\sqrt 2\omega_y}{|\nabla \omega|}, -\omega\right)$ is a positively oriented orthonormal frame of $\R^3$.  
Hence, in view of \eqref{eq:complexlinHsys}, $N(\omega)\omega\subset \ker \mc E''(\omega)$ and the above decomposition follows. Note that, by elliptic regularity, any $f\in \dot H^1(\S^2; \R)$ solving \eqref{eq:complexSchrod} is actually smooth.

It remains to prove the characterization of $T(\omega)$. For this, we first note that since $|\omega|^2=1$ we have $\langle \omega_z,\omega\rangle = 0$. Thus, by \eqref{eq:holomw},  for any $f\in \dot H^1(\S^2;\R)$ the vector field $f\omega$ is a conformal Jacobi field along $\omega$.  By Lemma \ref{lem:confjacobi}, any $u\in \ker \mc E''(\omega)$ is also a conformal Jacobi field along $\omega$, and hence by linearity the same also holds for the map $\tilde u := u -  \langle u, \omega\rangle\omega$. Moreover, note that  $\tilde u(z) \in T_{\omega(z)}\S^2$. Let us assume that the polynomials $P,Q$ in \eqref{eq:defomega} satisfy $\det P \leq \deg Q$, the other case being entirely analogous. Applying \cite[Lemma 3]{Gulliver89},  we deduce that $\tilde u$ is integrable, \textit{i.e.},\ there are polynomials $A,B\in \C[z]$ such that
\begin{equation}
\label{eq:tildeu}
\tilde u = \frac{\d}{\d t}\bigg|_{t=0} \pi\left(\frac{P+t A}{Q+tB}\right),
\end{equation}
where $\deg (A)\leq \deg(P)$ and $\deg(B)<\det(Q)$.  Conversely, given polynomials $A,B$ satisfying these conditions, we can define a map $\tilde u\in T(\omega)$ through \eqref{eq:tildeu}. This completes the characterization and thus $\dim T(\omega)=4 k+2$.
\end{proof}

\begin{corollary}\label{cor:nullity}
We have $\dim N(\omega)\geq 3$ with equality if and only if $\omega$ is non-degenerate.
\end{corollary}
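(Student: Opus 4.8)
The plan is to read the statement off directly from Proposition \ref{prop:decomp}, reducing the corollary to a dimension count together with the identification of three canonical elements of $N(\omega)$. Since the splitting $\ker\mc E''(\omega) = T(\omega)\oplus N(\omega)\,\omega$ is a \emph{pointwise-orthogonal} decomposition of each Jacobi field into its tangential part (valued in $T_{\omega(z)}\S^2$) and its normal part (a multiple of $\omega(z)$), the sum is genuinely direct. Combining this with the computation $\dim T(\omega)=4k+2$ from Proposition \ref{prop:decomp}, I obtain
$$\dim\ker\mc E''(\omega) = \dim T(\omega)+\dim N(\omega) = 4k+2+\dim N(\omega).$$

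The one point that needs an argument is the lower bound $\dim N(\omega)\geq 3$. The cleanest self-contained way is to exhibit three explicit solutions of the Schrödinger equation \eqref{eq:complexSchrod} defining $N(\omega)$, namely the three Euclidean components $\omega^1,\omega^2,\omega^3$ of $\omega$ itself. Indeed, reading off each component of the harmonic-map equation \eqref{eq:harmonic_map} gives $\Delta\omega^i+|\nabla\omega|^2\omega^i=0$, so that $\omega^i\in N(\omega)$ for $i=1,2,3$ by the definition \eqref{eq:defN}; geometrically these are precisely the normal Jacobi fields generated by translations of $\S^2\subset\R^3$ in the target. (Alternatively, one could simply invoke the a priori bound $\dim\ker\mc E''(\omega)\geq\dim\mc M_k=4k+5$ recorded in the introduction, but I prefer to make the three normal directions explicit.) To see that $\omega^1,\omega^2,\omega^3$ are linearly independent, suppose $\sum_i c_i\omega^i\equiv 0$, i.e.\ $\langle c,\omega\rangle\equiv 0$; this forces the image $\omega(\S^2)$ into the hyperplane $c^\perp$. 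But a degree $k\geq 1$ bubble is a nonconstant rational map, hence surjects onto $\S^2$, which spans $\R^3$, so $c=0$. Thus $\dim N(\omega)\geq 3$.

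The equivalence then follows immediately: by definition $\omega$ is non-degenerate precisely when $\dim\ker\mc E''(\omega)=4k+5$, and by the displayed identity this holds if and only if $\dim N(\omega)=3$. I do not expect any real obstacle here—the entire substance of the corollary is already contained in Proposition \ref{prop:decomp}, and the only step requiring a word is the linear independence of the three components $\omega^1,\omega^2,\omega^3$, which is immediate from the surjectivity of a nonconstant bubble onto $\S^2$.
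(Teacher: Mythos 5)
Your proof is correct and takes essentially the same route as the paper: your three solutions $\omega^1,\omega^2,\omega^3=\langle e_i,\omega\rangle$ are exactly the paper's space $L(\omega)$ of trivial solutions (see \eqref{eq:defL}), and in both arguments the equivalence with non-degeneracy is read off directly from the decomposition and the dimension count $\dim T(\omega)=4k+2$ in Proposition~\ref{prop:decomp}. The only difference is that you spell out the linear independence of the components via surjectivity of $\omega$, a point the paper leaves implicit.
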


\begin{proof}
By \eqref{eq:harmonic_map},  the linear space
\begin{equation}
\label{eq:defL}
L(\omega) :=  \big\{ \langle \xi , \omega \rangle  :\xi \in \R^3\big\}
\end{equation}
is a subspace of $N(\omega)$, and thus $\dim N(\omega)\geq 3$.  By Proposition \ref{prop:decomp}, equality holds if and only if $\dim \ker \mc E''(\omega)=4k+5$, \textit{i.e.},\ if and only if $\omega$ is non-degenerate.
\end{proof}

We refer to $L(\omega)$ as the space of \textit{trivial} solutions to the Schr\"odinger equation \eqref{eq:complexSchrod}. Since $\omega$ is conformal,   \eqref{eq:complexSchrod} can be rewritten as
\begin{equation*}\label{eq:eigenvalue_eqn_sing_metric}
\Delta_{g_\omega}\tilde f+2 \tilde f=0\,, \ \text{where } g_\omega:=\omega^* g_0 \ \text{ and } \ \tilde f := f \circ \pi^{-1}\in \dot{H}^1(\S^2 ;\R)\,.
\end{equation*} 
Here $g_\omega$ is the pullback metric of the standard round metric $g_0$ on $\mathbb{S}^2$  induced by $\omega\colon \S^2\to \S^2$; the metric $g_\omega$ in general has \textit{conical singularities} at the branch points of $\omega$.  
Equation \eqref{eq:complexSchrod} has received a lot of attention in the last decades, and the space of solutions is completely understood, see \textit{e.g.}\ \cite{Ejiri94,Ejiri98,EjiriKotani92,EjiriKotani93,Kotani97,MontielRos91,Nayatani93}.  
Surprisingly,  for non-generic $\omega$  there are non-trivial solutions to \eqref{eq:complexSchrod}, \textit{i.e.}\ $\dim N(\omega)>3$; the space of such $\omega$ has codimension larger than one \cite[Proof of Theorem A]{EjiriKotani93}.  In general,  $\dim N(\omega)-3$ is \textit{even}, since $N(\omega)/L(\omega)$ is a complex vector space \cite[Proposition 18]{MontielRos91}, and so we can write $\dim N(\omega)=3+2d$ for $d\in \N_0$.  
The $2d$ extra directions can be interpreted as
arising from smooth one-parameter families of harmonic maps with values in higher dimensional spheres \cite[Theorem A]{Ejiri98}.

The following result provides a concrete algebraic characterization of those bubbles $\omega$ for which there are nontrivial elements in $N(\omega)$.

\begin{theorem}[\cite{EjiriKotani93,MontielRos91}]\label{thm:MR}
Let $\omega$ be a bubble as in \eqref{eq:defomega},  let  $\{p_1, \dots, p_n\}\subset \S^2$ be its set of branch points  and $z$ a conformal coordinate on $\mathbb{S}^2$  with respect to which none of the branch points is $\infty$. 
There is a linear bijection between the space $N(\omega)/ L(\omega)$ of non-trivial solutions to \eqref{eq:complexSchrod} and the space of non-zero polynomials $R\in \C[z]$ with $\deg R \leq n-4$ and such that 
\begin{equation}
\label{eq:condsR}
\Res_{p_j} \left(h/\varphi'\right)= 0 \text{ for all } j\in \{1, \dots, n\}\,, \ \text{where } h(z):= \frac{R(z)}{(z-p_1)\dots(z-p_n)}\,.
\end{equation}
\end{theorem}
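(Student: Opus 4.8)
The plan is to realize the bijection through an explicit holomorphic object attached to each solution of the Schrödinger equation \eqref{eq:complexSchrod}. By Corollary \ref{cor:nullity} it suffices to describe $N(\omega)/L(\omega)$, so let $f$ solve $f_{z\bar z}+|\omega_z|^2f=0$, and write $\lambda:=2|\omega_z|^2=\tfrac{4|\varphi'|^2}{(1+|\varphi|^2)^2}$ for the conformal factor of the pullback metric $g_\omega=\lambda\,|dz|^2$. The object that will implement the correspondence is the $(2,0)$-part of the Obata tensor $\nabla^2_{g_\omega}f+f\,g_\omega$; in the coordinate $z$ this is the quadratic differential $q\,dz^2$ with
\[q:=f_{zz}-(\partial_z\log\lambda)\,f_z.\]
Since the trace of $\nabla^2_{g_\omega}f+f\,g_\omega$ equals $\Delta_{g_\omega}f+2f=0$, the tensor is trace-free and is therefore completely encoded by $q$.

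First I would show that $q$ is a meromorphic quadratic differential on $\S^2$. Differentiating and inserting $f_{z\bar z}=-|\omega_z|^2f$ together with the Liouville identity $\partial_z\partial_{\bar z}\log\lambda=-|\omega_z|^2$ (which encodes that $g_\omega$ has Gauss curvature $1$ away from branch points), one computes
\[\partial_{\bar z}q=|\omega_z|^2(\partial_z\log\lambda)\,f-(\partial_z|\omega_z|^2)\,f=0,\]
the last equality because $\partial_z|\omega_z|^2=(\partial_z\log\lambda)\,|\omega_z|^2$. At any immersed point of $\omega$ the metric $g_\omega$ is smooth and $q$ is holomorphic; this covers the poles of $\varphi$ and, by the coordinate choice, the point $\infty$. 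At a branch point $p_j$ one has $\varphi'\sim(z-p_j)^{m_j}$, hence $\partial_z\log\lambda=\tfrac{m_j}{z-p_j}+O(1)$, and since $f$ is smooth, $q$ has there at worst a simple pole. Regularity of $q\,dz^2$ at $\infty$ then forces $q=h$ with $h=R/\prod_{j}(z-p_j)$ and $\deg R\le n-4$. Finally $q\equiv 0$ means $\nabla^2_{g_\omega}f=-f\,g_\omega$; as $g_\omega$ is locally round, this Obata equation forces $f$ to be the pullback of a linear function, i.e.\ $f\in L(\omega)$ (see \eqref{eq:defL}). Thus $f\mapsto q=h$ descends to an injection of $N(\omega)/L(\omega)$ into $\{R\in\C[z]:\deg R\le n-4\}$.

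Next I would check that any $h$ arising this way automatically satisfies $\Res_{p_j}(h/\varphi')=0$. Setting $g:=f_z/\varphi'$, the equation yields $\partial_{\bar z}g=-\tfrac{2\overline{\varphi'}}{(1+|\varphi|^2)^2}f$ and $\tfrac{q}{\varphi'}=\partial_zg+\tfrac{2\bar\varphi\varphi'}{1+|\varphi|^2}g$, so that
\[\frac{q}{\varphi'}\,dz=dg+\eta,\qquad \eta:=\frac{2\overline{\varphi'}}{(1+|\varphi|^2)^2}f\,d\bar z+\frac{2\bar\varphi f_z}{1+|\varphi|^2}\,dz,\]
where $\eta$ is \emph{smooth} near $p_j$. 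Integrating over $\{|z-p_j|=\e\}$, the exact term $dg$ contributes $0$ (as $g$ is single-valued) and $\oint_\e\eta=\int_{\{|z-p_j|<\e\}}d\eta=O(\e^2)$; since $\Res_{p_j}(h/\varphi')$ is independent of $\e$, it must vanish. In other words, the residue condition is precisely the single-valuedness of $g=f_z/\varphi'$ around the cone point $p_j$.

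The remaining and hardest point is surjectivity: every $h=R/\prod_{j}(z-p_j)$ with $\deg R\le n-4$ and $\Res_{p_j}(h/\varphi')=0$ must come from some $f\in N(\omega)$. Here I would reverse the construction, solving $\partial_z\bigl((1+|\varphi|^2)^2f_z/\varphi'\bigr)=(1+|\varphi|^2)^2\,h/\varphi'$ for $f_z$, integrating to $f$, and verifying a posteriori that $f$ solves \eqref{eq:complexSchrod} with $q(f)=h$. All obstructions are localized at the cone points $p_j$, where $g_\omega$ has cone angle $2\pi(m_j+1)$: an indicial analysis of \eqref{eq:complexSchrod} there should show that the unique solvability condition is exactly $\Res_{p_j}(h/\varphi')=0$, the undetermined $z^{m_j+1}$-mode furnishing the new solution, after which the local pieces assemble into a global smooth $f$ (equivalently, $N(\omega)/L(\omega)$ is identified with the holomorphic sections of a degree $n-4$ line bundle cut out by the residues, as in \cite{MontielRos91,EjiriKotani93}). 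I expect this cone analysis and the global matching to be the main obstacle; by contrast the forward direction, through the holomorphic quadratic differential $q$, is soft, and it is there that the residues $\Res_{p_j}(h/\varphi')$ emerge naturally.
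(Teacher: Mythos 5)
Your forward direction is sound and essentially equivalent to the paper's: your $q=f_{zz}-(\partial_z\log\lambda)f_z$ is exactly the function $h_f$ of \eqref{eq:defh}, and your verification that it is a holomorphic quadratic differential with at most simple poles at the branch points, hence of the form $R/\prod_j(z-p_j)$ with $\deg R\le n-4$, matches Lemma \ref{lemma:h}\ref{it:hismero} and Corollary \ref{cor:h} (the paper reads these facts off the minimality of the Gauss parametrization $X=f\omega+\nabla_{g_\omega}f$ rather than computing $\partial_{\bar z}q$ directly, but the content is the same). Your derivation of the residue condition from $q/\varphi'\,\mathrm{d}z=\mathrm{d}g+\eta$ with $\eta$ bounded is a correct variant of Lemma \ref{lemma:h}\ref{it:zeroresidue}, and the Obata argument identifying the kernel with $L(\omega)$ is fine.

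The genuine gap is surjectivity, which you yourself flag as ``the main obstacle'' and leave as an expectation rather than an argument. Your proposed route (indicial analysis of the Schr\"odinger operator at each cone point, followed by global matching) is not carried out and is harder than what is needed. The paper's converse is a global Weierstrass-type integration: given $h$, one sets $\partial_z X_h=h\,\omega_{\bar z}/|\omega_z|^2=(h/\varphi')\big(\tfrac{1-\varphi^2}{2},\tfrac{i(1+\varphi^2)}{2},\varphi\big)$ and defines $X_h=\mathrm{Re}\int \partial_z X_h\,\mathrm{d}z$. The key point missing from your sketch is that the single condition $\Res_{p_j}(h/\varphi')=0$ already kills all three component periods: since $(\varphi-\varphi(p_j))/\varphi'$ is holomorphic at $p_j$ and $h$ has at most a simple pole there, the residues of $h\varphi/\varphi'$ and $h\varphi^2/\varphi'$ vanish automatically, so $X_h$ is single-valued. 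One then checks by direct computation that $f:=\langle X_h,\omega\rangle$ solves \eqref{eq:complexSchrod} on the punctured sphere, and an explicit expansion near each branch point (normalizing $\varphi=z^{m_j}$ locally) shows $f$ is bounded, so the singularities are removable and $f\in N(\omega)$. Without some such construction, your map is only an injection of $N(\omega)/L(\omega)$ into the space of admissible polynomials $R$, not the claimed bijection.
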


The proof of Theorem \ref{thm:MR} in the above references is carried out in
 a more general context.   In order to keep the exposition mostly self-contained, we include here a direct proof of their result in our setting, following essentially the arguments in \cite{MontielRos91}.

The main idea behind Theorem \ref{thm:MR} is to use the \textit{Gauss parametrization} \cite{Dajczer1985}. A surface $S\subset \R^3$ with invertible Gauss map $\omega$ is locally parametrized by an immersion
\begin{equation}
\label{eq:defX}
X=f\omega + \nabla_{g_\omega} f = f\omega +\frac{1}{|\omega_z|^2}\left(f_z\omega_{\bar z}+f_{\bar z}\omega_z\right),
\end{equation}
where $f=\langle X, \omega\rangle$ is the \textit{support function} of $S$. Thus, in the setting of Theorem \ref{thm:MR},  we have a local correspondence between functions $f\colon \mb S^2\setminus \{p_1, \dots, p_n\}\to \R$ and punctured surfaces with generalized Gauss map $\omega$.  The next lemma gives a geometric interpretation of the condition $f\in N(\omega)$ (recall \eqref{eq:defN}) in terms of the corresponding immersion defined through \eqref{eq:defX}:

\begin{lemma}\label{lemma:gaussparam}
Let $\omega$ be a bubble with branch points $\{p_1, \dots, p_n\}\subset \S^2$,  let $f\in C^\infty(\S^2;\R)$ and let $X\colon \S^2\setminus \{p_1, \dots, p_n\}\to \R^3$ be defined as in \eqref{eq:defX}. Then 
$$f\in N(\omega) \quad \iff \quad 
X \text{ is minimal}\,, \text{  i.e., it is weakly conformal and harmonic}\,. 
$$
\end{lemma}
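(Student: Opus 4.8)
The plan is to compute the first and second $z,\bar z$ derivatives of $X$ directly, exploiting the rigid structure of a weakly conformal harmonic map $\omega$ into $\S^2$. First I would fix notation by writing $\rho := |\omega_z|^2 = \langle \omega_z,\omega_{\bar z}\rangle$ and recording the algebraic and differential identities that $\omega$ satisfies: the conformality $\langle \omega_z,\omega_z\rangle = 0$ from \eqref{eq:holomw}, the orthogonality $\langle \omega,\omega_z\rangle = 0$ coming from $|\omega|=1$, the harmonic map equation $\omega_{z\bar z} = -\rho\,\omega$ from \eqref{eq:harmonic_map}, and the consequences $\omega_{zz} = (\log\rho)_z\,\omega_z$ and $\omega_{\bar z\bar z}=(\log\rho)_{\bar z}\,\omega_{\bar z}$, obtained by differentiating these relations and using that $\{\omega_z,\omega_{\bar z},\omega\}$ is a frame of $\C^3$ away from the branch points. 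The one genuinely geometric input I would isolate is the Liouville-type identity $(\log\rho)_{z\bar z} = -\rho$, which expresses that $g_\omega = \omega^* g_0$ has constant Gaussian curvature one away from $\{p_1,\dots,p_n\}$ (there $\omega$ is a local isometry onto the round sphere).

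With these in hand, the main computation is to differentiate \eqref{eq:defX}. Writing $X = f\omega + \rho^{-1}(f_z\omega_{\bar z}+f_{\bar z}\omega_z)$ and differentiating in $z$, the harmonic map equation makes the component along $\omega$ cancel identically, and after collecting terms one finds
\[
X_z = \rho^{-1} g\,\omega_z + \rho^{-1} Q\,\omega_{\bar z}, \qquad g := f_{z\bar z} + \rho f, \quad Q := f_{zz} - (\log\rho)_z f_z.
\]
The point to emphasize is that $g$ is precisely the Schrödinger operator appearing in \eqref{eq:complexSchrod}, so that $f\in N(\omega)$ if and only if $g\equiv 0$. Since $\langle \omega_z,\omega_z\rangle = \langle\omega_{\bar z},\omega_{\bar z}\rangle = 0$ and $\langle\omega_z,\omega_{\bar z}\rangle=\rho$, this immediately yields $\langle X_z,X_z\rangle = 2\rho^{-1} g Q$, so weak conformality of $X$ is equivalent to $gQ\equiv 0$; in particular $g\equiv 0$ already forces $X$ to be weakly conformal.

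For harmonicity I would differentiate once more. Using $\omega_{z\bar z}=-\rho\,\omega$ and $\omega_{\bar z\bar z}=(\log\rho)_{\bar z}\omega_{\bar z}$, a short computation gives
\[
X_{z\bar z} = (\rho^{-1}g)_{\bar z}\,\omega_z \;-\; g\,\omega \;+\; \rho^{-1} Q_{\bar z}\,\omega_{\bar z},
\]
and here the curvature identity $(\log\rho)_{z\bar z}=-\rho$ collapses the computation of $Q_{\bar z}$ to the clean commutation relation $Q_{\bar z} = g_z - (\log\rho)_z g$. The decisive observation is that the component of $X_{z\bar z}$ normal to the image sphere is exactly $-g\,\omega$. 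The equivalence then follows: if $f\in N(\omega)$, i.e.\ $g\equiv 0$, then $X_z = \rho^{-1}Q\,\omega_{\bar z}$ is null (so $X$ is weakly conformal) and $Q_{\bar z}=g_z=0$ together with $(\rho^{-1}g)_{\bar z}=0$ give $X_{z\bar z}=0$, so $X$ is harmonic, hence minimal; conversely, if $X$ is minimal then $X_{z\bar z}=0$, and reading off the $\omega$-component forces $g\equiv 0$ on $\S^2\setminus\{p_1,\dots,p_n\}$, hence everywhere by smoothness of $f$, i.e.\ $f\in N(\omega)$.

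The bulk of the work is bookkeeping, so the main obstacle is not conceptual but organizational: one must set up the structure equations for $\omega$ carefully and keep track of the $\{\omega_z,\omega_{\bar z},\omega\}$ components through two differentiations. The single nonroutine ingredient is the Liouville identity $(\log\rho)_{z\bar z}=-\rho$, without which $Q_{\bar z}$ would not simplify; once it is in place, the fact that the normal part of $X_{z\bar z}$ equals $-g\,\omega$ makes the equivalence $f\in N(\omega)\Leftrightarrow X$ minimal immediate.
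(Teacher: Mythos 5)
Your proof is correct and follows essentially the same route as the paper: compute $X_z$ in the frame $\{\omega_z,\omega_{\bar z},\omega\}$ using the structure equations $\omega_{z\bar z}=-|\omega_z|^2\omega$, $\omega_{zz}=(\log|\omega_z|^2)_z\,\omega_z$ and the Liouville identity, read off the conformal factor $\langle X_z,X_z\rangle = 2\rho^{-1}gQ$, and extract $g$ from the $\omega$-component of $X_{z\bar z}$. The only difference is that you make explicit the formula for $X_{z\bar z}$ and the commutation relation $Q_{\bar z}=g_z-(\log\rho)_z g$, which the paper leaves as ``one can also check that $X_{z\bar z}=0$''.
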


\begin{proof}
Using \eqref{eq:holomw} and $|\omega|=1$,  after differentiation we deduce that
\begin{equation}
\label{eq:iii}
\omega_{zz} =\big[\log(|\omega_z|^2)\big]_z \,\omega_z\,.
\end{equation}
An elementary but lengthy calculation, using \eqref{eq:iii}, then shows that
\begin{equation}
\label{eq:iv}
\big[\log(|\omega_z|^2)\big]_{z\bar z} + |\omega_z|^2=0\,.
\end{equation}
Note also that  $\p_z (\frac{1}{|\omega_z|^2}) = - \frac{\log(|\omega_z|^2)_z}{|\omega_z|^2}$; this, combined with \eqref{eq:harmonic_map} and \eqref{eq:iv}, yields
\begin{equation}
\label{eq:X_z}
X_z = \left(f_{z \bar z} + |\omega_z|^2 f\right)\frac{\omega_z}{|\omega_z|^2} + h_f  \frac{\omega_{\bar z}}{|\omega_z|^2}\,,
\end{equation}
where we set
\begin{align}\label{eq:defh}
h_f:=f_{zz}-\big[\log (|\omega_z|^2)\big]_z f_z\,.
\end{align}
We can then further compute
\begin{equation}
\label{eq:confX}
\langle X_z, X_z \rangle = 2\frac{h_f}{|\omega_z|^2} \left(f_{z \bar z} + |\omega_z|^2 f\right)\,.
\end{equation}

The  lemma follows from the above identities using elementary calculations. If $f\in N(\omega)$ then clearly $X$ is weakly conformal by \eqref{eq:confX} and one can also check that $X_{z \bar z}=0$, thus $X$ is minimal.  Conversely, if $X$ is minimal then $f\in N(\omega)$: this can be deduced from  \eqref{eq:X_z}  since 
$$\langle X_{z \bar z}, \omega\rangle = -\left(f_{z \bar z} + |\omega_z|^2 f\right)\,,$$
so if $X$ is harmonic then $f\in N(\omega)$.
\end{proof}

It will be useful to convert between $\omega$  and $\varphi$ through the stereographic projection, as in \eqref{eq:defomega}.  
Using the fact that $\varphi$ is meromorphic, one can verify that
\begin{equation}
\label{eq:derivatives}
\frac{\omega_{\bar z}}{|\omega_z|^2} = \frac{1}{\varphi'}\left(\frac{1-\varphi^2}{2},\frac{i(1+\varphi^2)}{2},\varphi\right)\,.
\end{equation}
The next lemma is a simple consequence of the above calculations.

\begin{lemma}\label{lemma:h}
For any conformal coordinate $z$ on $ \mathbb S^2$ and $f\in N(\omega)$, let $h_f\colon \mb S^2\setminus \{p_1,\dots,p_n\}\to \R$ be defined as in \eqref{eq:defh}.
Then:
\begin{enumerate}
\item \label{it:hismero}$h_f$ is meromorphic, i.e.,\ $\p_{\bar z} \,h_f=0$ on $\mathbb S^2\setminus \lbrace p_1,\ldots,p_n\rbrace$, and its poles are simple;
\item\label{it:zeroresidue} the meromorphic function $h_f/\varphi'$, where $\varphi=P/Q$, has zero residue at each pole $p_j$.
\end{enumerate}
\end{lemma}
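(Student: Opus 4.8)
The plan is to prove both claims by analyzing the function $h_f$ defined in \eqref{eq:defh} using the structure we have already established, namely the conformality and harmonicity interpretations from Lemma \ref{lemma:gaussparam}. The key input is that $f\in N(\omega)$, which by that lemma means the immersion $X$ in \eqref{eq:defX} is minimal, hence weakly conformal and harmonic. I will use these two properties to control $h_f$.

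For part \eqref{it:hismero}, I would first observe that since $f\in N(\omega)$ we have $f_{z\bar z}+|\omega_z|^2 f=0$, so the first term in \eqref{eq:X_z} vanishes and $X_z = h_f\,\omega_{\bar z}/|\omega_z|^2$. The harmonicity of $X$ (i.e.\ $X_{z\bar z}=0$) should then force $h_f$ to be holomorphic: differentiating $X_z$ in $\bar z$ and using the identities \eqref{eq:iii}, \eqref{eq:iv} for $\omega$ together with $\omega_{\bar z\bar z}=[\log|\omega_z|^2]_{\bar z}\,\omega_{\bar z}$ (the conjugate of \eqref{eq:iii}), the coefficient of the independent frame vector $\omega_{\bar z}/|\omega_z|^2$ reduces to $\p_{\bar z} h_f$, so $X_{z\bar z}=0$ yields $\p_{\bar z}h_f=0$ on $\S^2\setminus\{p_1,\dots,p_n\}$. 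Thus $h_f$ is meromorphic away from the branch points. To see that the poles are \emph{simple}, I would examine the local behavior of $h_f$ near a branch point $p_j$, where $|\omega_z|^2$ vanishes to some finite order $m_j\ge 1$ (the branching order). Since $f$ is smooth across $p_j$ and $[\log|\omega_z|^2]_z$ has a simple pole there with integer residue equal to the branching order, the definition \eqref{eq:defh} of $h_f$ shows that $h_f$ can have at worst a pole at $p_j$; a direct Laurent expansion, using that $\p_z^2 f$ is bounded and that the singular contribution comes from the $[\log|\omega_z|^2]_z f_z$ term, gives a pole of order exactly one.

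For part \eqref{it:zeroresidue}, the cleanest route is geometric: the immersion $X$ extends (or has a well-defined limit) across the punctures precisely when a certain period vanishes, and this period is governed by the residue of $h_f/\varphi'$. Concretely, using \eqref{eq:derivatives} I would rewrite $X_z = h_f\,\omega_{\bar z}/|\omega_z|^2 = \frac{h_f}{\varphi'}\bigl(\tfrac{1-\varphi^2}{2},\tfrac{i(1+\varphi^2)}{2},\varphi\bigr)$, which is a vector of meromorphic one-forms. Since $X$ is a single-valued (real) map defined on the punctured sphere and $f$ is smooth on all of $\S^2$, integrating $X_z\,\d z$ around a small loop encircling $p_j$ must produce no period; by the residue theorem this period equals $2\pi i$ times the residue at $p_j$ of each component of $\frac{h_f}{\varphi'}\bigl(\tfrac{1-\varphi^2}{2},\tfrac{i(1+\varphi^2)}{2},\varphi\bigr)$. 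Evaluating these residues and using that $\varphi$ is holomorphic at $p_j$, one finds they are all constant multiples of $\Res_{p_j}(h_f/\varphi')$, so the vanishing of the period is equivalent to $\Res_{p_j}(h_f/\varphi')=0$.

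I expect the main obstacle to be the local analysis at the branch points in part \eqref{it:hismero}, specifically pinning down that the pole of $h_f$ is exactly simple rather than higher order. This requires a careful Laurent expansion of $|\omega_z|^2$ and its logarithmic derivative near $p_j$, keeping track of how the smoothness of $f$ across the puncture interacts with the conical singularity of the metric $g_\omega$; the cancellations that reduce the a priori pole order down to one are the delicate point. The meromorphy itself and the residue computation in part \eqref{it:zeroresidue} are comparatively routine once \eqref{eq:X_z} and \eqref{eq:derivatives} are in hand, being essentially the statement that a single-valued minimal immersion has no real periods.
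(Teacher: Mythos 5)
Your proposal follows essentially the same route as the paper's proof: for \eqref{it:hismero} you use $f\in N(\omega)$ to reduce \eqref{eq:X_z} to $X_z=h_f\,\omega_{\bar z}/|\omega_z|^2$, deduce $\p_{\bar z}h_f=0$ from $X_{z\bar z}=0$, and read off the simple poles from \eqref{eq:defh} (smooth $f$ together with the simple pole of $[\log|\omega_z|^2]_z$ at each branch point); for \eqref{it:zeroresidue} you use the single-valuedness of $X$ to kill the periods of $h_f/\varphi'$. The one caveat is that single-valuedness of the \emph{real} map $X$ only forces $\mathrm{Re}\oint X_z\,\d z=0$, not the vanishing of the full complex period, so a short extra computation is needed to see that the three resulting real conditions on $\Res_{p_j}(h_f/\varphi')$ force it to vanish --- but this is the same level of terseness as the paper's own argument, and the conclusion is correct.
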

\begin{proof}
For \ref{it:hismero} we note that, since $f\in N(\omega)$, from \eqref{eq:X_z} and \eqref{eq:derivatives} we obtain
\begin{equation}
\label{eq:X_z-varphi}
X_z =h_f  \frac{\omega_{\bar z}}{|\omega_z|^2}= \frac{h_f}{\varphi'} \left(\frac{1-\varphi^2}{2},\frac{i(1+\varphi^2)}{2},\varphi\right)\,.
\end{equation}
Thus $\p_{\bar z} h_f =0$, since  $\varphi$ is meromorphic and $X_{z \bar z}=0$  according to Lemma \ref{lemma:gaussparam}. Moreover, the poles of $h_f$ are simple, since  in \eqref{eq:defh}  $f$ is smooth and $|\omega_z|$ vanishes to finite order at each branch point.  
Claim  \ref{it:zeroresidue} follows again from \eqref{eq:X_z-varphi},  which implies that $h_f/\varphi'\d z= (X^1-i X^2)_z \d z$: the latter differential has zero integral along any loop, and hence zero residue at each pole.
\end{proof}

\begin{corollary}\label{cor:h}
For $f\in N(\omega)$
and any choice
of conformal coordinate $z$ 
 with respect to which none of the branch points is $\infty$,
 there exists $R\in \mathbb C[z]$ of degree at most $n-4$ such that
\begin{align}
\label{eq:Rhf}
h_f(z)=\frac{R(z)}{(z-p_1)\cdots (z-p_n)}\,.
\end{align}
\end{corollary}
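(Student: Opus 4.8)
The plan is to identify $h_f$ with a globally defined meromorphic quadratic differential on the Riemann sphere and to read off the claimed rational form together with the degree bound from its behaviour at the $n$ branch points and at $\infty$. By Lemma~\ref{lemma:h}\ref{it:hismero}, $h_f$ is holomorphic on $\C\setminus\{p_1,\dots,p_n\}$ with at most simple poles at the $p_j$, so the function
\[
R(z):=h_f(z)\,(z-p_1)\cdots(z-p_n)
\]
extends to an entire function on $\C$. It then suffices to show that $R$ is a polynomial of degree at most $n-4$, and since an entire function of polynomial growth is automatically a polynomial, this reduces to the decay estimate $h_f(z)=O(|z|^{-4})$ as $z\to\infty$.

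The key point --- and the step I expect to carry the real content --- is that $h_f$ transforms as a \emph{quadratic differential} rather than as a function: for a holomorphic change of coordinate $z\mapsto w$ one has $h_f^{(w)}=h_f^{(z)}(\mathrm dz/\mathrm dw)^2$. I would obtain this directly from \eqref{eq:X_z-varphi}, rewritten as $X_z=h_f\,\omega_{\bar z}/|\omega_z|^2$, by combining the coordinate-invariance of the $(1,0)$-form $X_z\,\mathrm dz$ with the elementary transformation law $\omega_{\bar z}/|\omega_z|^2=(\mathrm dz/\mathrm dw)\,\omega_{\bar w}/|\omega_w|^2$; equivalently, the same rule can be checked straight from the definition \eqref{eq:defh}.

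Finally I would evaluate this invariance at the point $\infty$. Setting $w=1/z$, so that $(\mathrm dz/\mathrm dw)^2=w^{-4}$, the transformation law gives $h_f^{(w)}=w^{-4}h_f^{(z)}(1/w)$. Because none of the branch points is $\infty$, in the coordinate $w$ the maps $f$ and $\omega$ are smooth near $w=0$ and $|\omega_w|$ does not vanish there; reading \eqref{eq:defh} in the $w$-variable shows that $h_f^{(w)}$ is smooth, and in particular finite, at $w=0$. Finiteness of $w^{-4}h_f^{(z)}(1/w)$ as $w\to0$ is exactly the decay $h_f(z)=O(|z|^{-4})$ at $\infty$. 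Hence $R(z)=O(|z|^{n-4})$ as $z\to\infty$, so $R$ is a polynomial of degree at most $n-4$, which yields \eqref{eq:Rhf} and completes the proof.
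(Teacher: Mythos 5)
Your proof is correct and follows essentially the same route as the paper: both arguments rest on the observation that $h_f$ transforms as a quadratic differential, so that regularity in the coordinate $w=1/z$ near $w=0$ (possible since $\infty$ is not a branch point) forces the decay $h_f(z)=O(|z|^{-4})$, which combined with the simple poles at the $p_j$ gives the degree bound. The paper merely organizes the bookkeeping differently, via a partial-fraction decomposition $h_f = g + \sum_j a_j/(w-p_j)$ and then killing the entire part $g$, whereas you bound the growth of $R=h_f\prod_j(z-p_j)$ directly; both are fine.
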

\begin{proof}
Applying a translation in the complex plane if necessary, we may without restriction fix a conformal coordinate $w$ in which none of the poles of $\varphi$ is $0$ or $\infty$.
By Lemma \ref{lemma:h}\ref{it:hismero},  there exists an entire function $g$ and $a_1,\ldots,a_n\in\C$ such that
\begin{align*}
h_f(w)=g(w) +\sum_{j=1}^n \frac{a_j}{w-p_j}\,.
\end{align*}
If $w=w(z)$ is a conformal change of variables then the meromorphic quadratic differential  $h_f(w)(\tp d w)^2$ pulls back to $h_f(w(z)) (\tp d w/\tp d z)^2 (\tp d z)^2.$
Thus, when $w(z)=1/z$,   we obtain a new meromorphic function
\begin{align*}
h_f(w(z)) \bigg(\frac{\tp d w}{\tp d z}\bigg)^2 = \frac{1}{z^4}g(1/z) +\frac{1}{z^4}\sum_{j=1}^n \frac{a_j}{1/z -p_j}
=\frac{1}{z^4}g(1/z) +\frac{1}{z^3}\frac{\tilde R(z)}{\prod_{j=1}^{ n}(z-1/p_j)}\,,
\end{align*}
for some $\tilde R\in\mathbb C[z]$ with $\deg \tilde R\leq n-1$.
In these new coordinates,  Lemma \ref{lemma:h}\ref{it:hismero} implies that the entire function $g$ is zero and that $\tilde R(z)=z^3 R(z)$ for some $R\in\mathbb C[z]$ with $\deg R\leq n-4$.
\end{proof}

\begin{proof}[Proof of Theorem \ref{thm:MR}]

By Lemma \ref{lemma:h} and Corollary \ref{cor:h}, for each $f\in N(\omega)$ one can associate a function $h_f\colon \mb S^2\setminus \{p_1,\dots,p_n\}\to \R$ satisfying \eqref{eq:condsR}.  It is easy to see from \eqref{eq:iii}  and \eqref{eq:defh}  that if $f\in L(\omega)$ then $h_f=0$, \textit{i.e.},\ the corresponding polynomial $R$ in \eqref{eq:Rhf} is zero.

Conversely, given a function $h\colon \mb S^2\setminus \{p_1,\dots,p_n\}\to \R$ satisfying \eqref{eq:condsR}, one constructs an immersion $X_h\colon  \mb S^2\setminus \{p_1,\dots,p_n\}\to \R^3$ such that
$$\p_z X_h = h \frac{\omega_{\bar z}}{|\omega_z|^2}, $$
cf.\ \eqref{eq:X_z-varphi}. In fact, we construct $X_h$   by 
\begin{equation}
\label{eq:intX}
X_h = \textup{Re} \int \p_z X_h \d z\,,
\end{equation}
 where the path starts from a fixed point on $\mb S^2\setminus \{p_1,\dots,p_n\}$;  the no residue condition in \eqref{eq:condsR} guarantees that $X_h$ is independent of the choice of path. Moreover,  $X_h$ is unique up to addition of a constant vector in $\R^3$.  We claim that $\langle X_h, \omega\rangle \in N(\omega)$. Once this is shown the theorem follows, since the linear maps
$$N(\omega)\ni f\mapsto f\omega + \frac{1}{|\omega_z|^2} (f_z \omega_{\bar z} + f_{\bar z} \omega_z)\,, \qquad X\mapsto \langle X, \omega\rangle\in N(\omega)$$
are inverse to each other,  and hence 
$$N(\omega)/L(\omega)\ni f\mapsto h_f, \qquad h\mapsto \langle X_h,\omega\rangle\in N(\omega)/L(\omega)$$ are also inverses, since adding a constant to $X_h$ amounts to adding a trivial solution to $f$.

Thus, to complete the proof, it remains to show that $\langle X_h, \omega\rangle \in N(\omega)$.  To be precise, note that $\langle X_h, \omega\rangle$ is only defined in $\mb S^2\setminus \{p_1,\dots, p_n\}$ and that, in this punctured sphere, it is a solution of the Schr\"odinger equation \eqref{eq:complexSchrod}, since 
\begin{align*}
\p_{z\bar z} \langle X_h, \omega\rangle + |\omega_z|^2  \langle X_h, \omega\rangle 
& = \p_{\bar z} \left(\langle \p_z X_h, \omega\rangle + \langle X_h ,\omega_z\rangle \right) + |\omega_z|^2 \langle X_h, \omega\rangle\\
& =\langle X_h, \omega_{z\bar z}\rangle +  \langle X_h,  |\omega_z|^2 \omega\rangle=0\,.
\end{align*}
 The  last equality follows from \eqref{eq:harmonic_map} and to pass to the second line we used that 
$$\langle \partial_zX_h, \omega \rangle =\frac{h}{|\omega_z|^2}\langle \omega_{\bar z}, \omega\rangle=0\,,
$$
since $|\omega|=1$,
and that, since $X_h$ is a real vector field, 
$$\langle \p_{\bar z} X_h, \omega_z\rangle = \langle \p_{\bar z} \overline{X_h}, \omega_z\rangle = \langle \overline{\p_z X_h}, \omega_z\rangle=\frac{h}{|\omega_z|^2}\langle \omega_z, \omega_z\rangle=0\,, $$
cf.\  \eqref{eq:holomw}. Thus from elliptic regularity theory we deduce that $\langle X_h, \omega\rangle$ can be extended to a function in $N(\omega)$ once we show that $\langle X_h,\omega\rangle \in L^\infty(\mb S^2\setminus \{p_1, \dots ,p_n\}).$

To prove the boundedness of $\langle X_h,\omega\rangle$, fix $p_j$ for some $j=1,\dots,n,$ and choose a local conformal coordinate $z$ in a neighborhood of $p_j$ with $z(p_j)=0$.  Up to a rotation in $\R^3$, we can suppose that $\varphi(z) = z^{m_j}$ in this neighborhood, where $m_j>1$,  and so, from \eqref{eq:derivatives}, we see that
\begin{equation}\label{def:X_z}
\p_z X_h = \frac{h}{m_j} \left(\frac{z^{1-m_j} - z^{1+m_j}}{2}, \frac{i (z^{1-m_j} + z^{1+m_j})}{2} ,2z\right)\,.
\end{equation}
In these coordinates we may write $h$ as
$h(z)=\frac{c_0}{z}+\sum_{\ell=0}^\infty c_\ell z^\ell$, for some $(c_\ell)_{\ell\in \N}\in \C$, so that \eqref{eq:intX} and \eqref{def:X_z} imply that in a neighbourhood of 0,
$$X_h(z)=\frac{1}{2m_j(1-m_j)}\Big(c_0 \mathrm{Re}(z^{1-m_j})+o(z^{1-m_j}),c_0\mathrm{Im}(z^{1-m_j})+o(z^{1-m_j}),c_0(1-m_j)2\mathrm{Re}(z)+o(z)\Big)\,,$$
where $\underset{z\to 0}{\lim}\frac{o(z)}{z}=0$. By \eqref{eq:defomega}  and \eqref{eq:stereo}  the bubble $\omega$ can be expressed in these coordinates as
$$\omega(z)=\left(\frac{2z^{m_j}}{|z|^{2m_j}+1}, \frac{|z|^{2m_j}-1}{|z|^{2m_j}+1}\right)\,,$$ 
hence the fact that $\langle X_h, \omega\rangle$ is bounded near 0 follows 
from
the last two formulas.
\end{proof}

\begin{proof}[Proof of Theorem \ref{thm:charact}]
The result follows by combining Corollary \ref{cor:nullity} with Theorem \ref{thm:MR}.
\end{proof}

\begin{proof}[Proof of Corollary \ref{cor:exnondeg}]
The main claim follows from Theorem \ref{thm:charact}, since if $\omega$ is degenerate then there must exist a corresponding non-zero polynomial $R$ with $\deg R\leq n-4$, where $\omega$ has $n$ branch points, say $p_1, \dots, p_n$.  Hence if $\omega$ is degenerate we must have $n\geq 4$. This also immediately implies \ref{it:standard}. To prove \ref{it:deg2} note that if $m_j>1$ is the multiplicity of $\omega$ at $p_j$ (\textit{i.e.}\ if $m_j-1$ is the algebraic multiplicity of $p_j$ as a zero of $|\nabla \omega|$), then by the Riemann--Hurwitz formula  we have 
\begin{equation}
\label{eq:RH}
2(k-1) = \sum_{j=1}^n (m_j -1)\,,
\end{equation}
where $k$ is the degree of $\omega$. Thus $\omega$ can have at least 4 branch points only if $k\geq  3 $\,. 
\end{proof}

\begin{proof}[Proof of Corollary \ref{cor:exdeg}]
Note that, by \eqref{eq:RH}, a degree 3 bubble can only have at most 4 different branch points, so by Corollary \ref{cor:exnondeg} if $\omega$ is degenerate then indeed it must have exactly 4 different branch points $p_1,\dots, p_4$. By Theorem \ref{thm:charact}, we see that $\omega$ is degenerate if and only if
$$\Res_{p_j} \left(1/\varphi'\right) = 0\quad \text{for } j=1, 2,3,4\,,$$
where $\varphi = P/Q$ is as in \eqref{eq:defomega}. Elementary computations then yield the conclusion, see \cite[page 171]{MontielRos91} for further details.
\end{proof}

\bibliographystyle{acm}
\bibliography{ref_Hbubbles}

\begin{thebibliography}{10}

\bibitem{Brezis1985}
{\sc Brezis, H., and Coron, J.~M.}
\newblock {Convergence of solutions of H-systems or how to blow bubbles}.
\newblock {\em Arch. Ration. Mech. Anal. 89}, 1 (1985), 21--56.

\bibitem{Caldiroli04}
{\sc Caldiroli, P., and Musina, R.}
\newblock Existence of {{\(H\)}}-bubbles in a perturbative setting.
\newblock {\em Rev. Mat. Iberoam. 20}, 2 (2004), 611--626.

\bibitem{Caldiroli04reduc}
{\sc Caldiroli, P., and Musina, R.}
\newblock {{\(H\)}}-bubbles in a perturbative setting: the finite-dimensional
  reduction method.
\newblock {\em Duke Math. J. 122}, 3 (2004), 457--484.

\bibitem{Caldiroli2006}
{\sc Caldiroli, P., and Musina, R.}
\newblock {The Dirichlet Problem for H-Systems with Small Boundary Data: BlowUp
  Phenomena and Nonexistence Results}.
\newblock {\em Arch. Ration. Mech. Anal. 181}, 1 (2006), 1--42.

\bibitem{Chanillo05}
{\sc Chanillo, S., and Malchiodi, A.}
\newblock Asymptotic {Morse} theory for the equation {{\(\Delta v- 2v_x\wedge
  v_y\)}}.
\newblock {\em Commun. Anal. Geom. 13}, 1 (2005), 187--251.

\bibitem{Dajczer1985}
{\sc Dajczer, M., and Gromoll, D.}
\newblock {Gauss parametrizations and rigidity aspects of submanifolds}.
\newblock {\em J. Differ. Geom. 22}, 1 (1985), 1--12.

\bibitem{Ejiri94}
{\sc Ejiri, N.}
\newblock Minimal deformation of a non-full minimal surface in {{\(S^ 4(1)\)}}.
\newblock {\em Compos. Math. 90}, 2 (1994), 183--209.

\bibitem{Ejiri98}
{\sc Ejiri, N.}
\newblock The boundary of the space of full harmonic maps of {{\(S^2\)}} into
  {{\(S^{2m}(1)\)}} and extra eigenfunctions.
\newblock {\em Jpn. J. Math., New Ser. 24}, 1 (1998), 83--121.

\bibitem{EjiriKotani92}
{\sc Ejiri, N., and Kotani, M.}
\newblock Minimal surfaces in {{\(S^{2m}(1)\)}} with extra eigenfunctions.
\newblock {\em Q. J. Math., Oxf. II. Ser. 43}, 172 (1992), 421--440.

\bibitem{EjiriKotani93}
{\sc Ejiri, N., and Kotani, M.}
\newblock Index and flat ends of minimal surfaces.
\newblock {\em Tokyo J. Math. 16}, 1 (1993), 37--48.

\bibitem{Frank2023}
{\sc Frank, R.~L.}
\newblock {Degenerate stability of some Sobolev inequalities}.
\newblock {\em Ann. Inst. Henri Poincar\'e (C) Anal. Non Lin\'eaire 39}, 6
  (2023), 1459--1484.

\bibitem{Frank2023a}
{\sc Frank, R.~L., and Peteranderl, J.~W.}
\newblock {Degenerate stability of the Caffarelli--Kohn--Nirenberg inequality
  along the Felli--Schneider curve}.
\newblock {\em Calc. Var. Partial Differ. Equ. 63}, 2 (2024), 44.

\bibitem{Gulliver89}
{\sc Gulliver, R., and White, B.}
\newblock The rate of convergence of a harmonic map at a singular point.
\newblock {\em Math. Ann. 283}, 4 (1989), 539--549.

\bibitem{Isobe2001a}
{\sc Isobe, T.}
\newblock {On the asymptotic analysis of $H$-systems. I. Asymptotic behavior of
  large solutions}.
\newblock {\em Adv. Differ. Equations 6}, 5 (2001), 513--546.

\bibitem{Isobe2001b}
{\sc Isobe, T.}
\newblock {On the asymptotic analysis of $H$-systems. II. The construction of
  large solutions}.
\newblock {\em Adv. Differ. Equations 6}, 6 (2001), 641--700.

\bibitem{Kotani97}
{\sc Kotani, M.}
\newblock Harmonic 2-spheres with {{\(r\)}} pairs of extra eigenfunctions.
\newblock {\em Proc. Am. Math. Soc. 125}, 7 (1997), 2083--2092.

\bibitem{Lemaire2009}
{\sc Lemaire, L., and Wood, J.~C.}
\newblock {Jacobi fields along harmonic 2-spheres in 3- and 4-spheres are not
  all integrable}.
\newblock {\em Tohoku Math. J. 61}, 2 (2009), 165--204.

\bibitem{Malchiodi2020}
{\sc Malchiodi, A., Rupflin, M., and Sharp, B.}
\newblock {{\L}ojasiewicz inequalities near simple bubble trees}.
\newblock {\em American Journal of Mathematics 146}, 5 (2024).

\bibitem{MontielRos91}
{\sc Montiel, S., and Ros, A.}
\newblock Schr{\"o}dinger operators associated to a holomorphic map.
\newblock Global differential geometry and global analysis, {Proc}. {Conf}.,
  {Berlin}/{Ger}. 1990, {Lect}. {Notes} {Math}. 1481, 147-174, 1991.

\bibitem{Musina04}
{\sc Musina, R.}
\newblock The role of the spectrum of the {Laplace} operator on {{\(\mathbb
  S^2\)}} in the {{\(\mathcal H\)}} bubble problem.
\newblock {\em J. Anal. Math. 94\/} (2004), 265--291.

\bibitem{Nayatani93}
{\sc Nayatani, S.}
\newblock Morse index and {Gauss} maps of complete minimal surfaces in
  {Euclidean} 3- space.
\newblock {\em Comment. Math. Helv. 68}, 4 (1993), 511--537.

\bibitem{Simon1996}
{\sc Simon, L.}
\newblock {\em {Theorems on Regularity and Singularity of Energy Minimizing
  Maps}}.
\newblock Birkh{\"{a}}user, Basel, 1996.

\bibitem{SireWeiZhengZhou23}
{\sc Sire, Y., Wei, J., Zheng, Y., and Zhou, Y.}
\newblock Finite-time singularity formation for the heat flow of the
  {\(h\)}-system.
\newblock {\em arXiv:2311.14336\/} (2023).

\bibitem{Struwe1985}
{\sc Struwe, M.}
\newblock Large {H}-surfaces via the mountain-pass-lemma.
\newblock {\em Math. Ann. 270\/} (1985), 441--459.

\bibitem{Struwe2008}
{\sc Struwe, M.}
\newblock {\em {Variational Methods}}, 4~ed.
\newblock Springer, Berlin, Heidelberg, 2008.

\bibitem{Wente1969}
{\sc Wente, H.~C.}
\newblock {An existence theorem for surfaces of constant mean curvature}.
\newblock {\em J. Math. Anal. Appl. 26}, 2 (1969), 318--344.

\end{thebibliography}

\end{document}